\theoremstyle{plain}
\newtheorem{theorem}{Theorem}[section]
\newtheorem{proposition}[theorem]{Proposition}
\newtheorem{corollary}[theorem]{Corollary}
\newtheorem{lemma}[theorem]{Lemma}
\newtheorem{definition}[theorem]{Definition}
\theoremstyle{remark}
\newtheorem{remark}[theorem]{Remark}
\newtheorem{example}[theorem]{Example}
\numberwithin{equation}{section}
\newcommand{\K}{\ensuremath{\Bbbk}}
\newcommand{\C}{\mathcal C}
\newcommand{\rad}{\operatorname{rad}}
\newcommand{\Hom}{\operatorname{Hom}}
\newcommand{\Ker}{\operatorname{Ker}}
\newcommand{\Ext}{\operatorname{Ext}}
\renewcommand{\Im}{\operatorname{Im}}
\newcommand{\Id}{\textsl{Id}}
\renewcommand{\mod}{\operatorname{mod}}
\newcommand{\inc}{\operatorname{inc}}
\newcommand{\HH}{\ensuremath{\mathsf{HH}}}
\begin{document}

\title{The Ext-algebra for infinitesimal deformations}

\author[M. J. Redondo]{Mar\'\i a Julia Redondo $^{1, 2}$}

\author[L. Rom\'an]{Lucrecia Rom\'an $^1$}

\author[F. Rossi Bertone]{Fiorela Rossi Bertone $^1$}

\address{$^1$ Instituto de Matem\'atica (INMABB), Departamento de Matem\'atica, Universidad Nacional del Sur (UNS)-CONICET, Bah\'\i a Blanca, Argentina}
\address{$^2$ Guangdong Technion Israel Institute of Technology, Shantou, Guangdong Province, China}

\email{mredondo@uns.edu.ar, lroman@uns.edu.ar, fiorela.rossi@uns.edu.ar}

\subjclass[2010]{16S80, 16E05, 16E30.}

\keywords{Infinitesimal deformations, Ext-algebra}

\thanks{The first and the third authors are research members of CONICET (Argentina). The authors  have been  supported  by  the  project  PICT-2019-03360.}


\begin{abstract}
Let $f$ be a Hochschild $2$-cocycle and let $A_f$ be an infinitesimal deformation of an associative finite dimensional algebra $A$ over an algebraically closed field $\K$.  We investigate the algebra structure of the Ext-algebra of $A_f$ and, under some conditions on $f$, we describe it in terms of the Ext-algebra of $A$.  We achieve this description by getting an explicit construction of minimal projective resolutions in $\mod A_f$.
\end{abstract}

\maketitle
\section*{Introduction}

For any associative $\K$-algebra $A$ and $A$-module $M$, the Ext-algebra of $M$ is the vector space $\Ext^*_A(M,M)=\bigoplus_{i=0}^\infty \Ext_A^i(M,M)$ that, equipped with the Yoneda product, is also an associative algebra. Such structure has been discussed for different classes of algebras, see for instance \cite{B2, G1, GK1, GK2, GMMZ, GSST, GZ}. 

On the one hand, the Ext-algebra can be defined in terms of equivalence classes of long exact sequences and, in this case, the multiplication corresponds to splicing exact sequences. 
On the other hand, since it is the cohomology algebra of the endomorphism
dg-algebra $\operatorname{RHom}_A(P,P)$ for a projective resolution $P$ of $M$, the multiplication can be defined in terms of composition of morphisms, see \cite[Theorem 9.1]{HS}  and \cite[\S 2.6]{B1}.
The last point of view shows that it has more structure than merely a graded associative multiplication: it is equipped with an $A_{\infty}$-algebra structure, see \cite{K1}. 
The well-known Koszul duality says  that there is a one to one correspondence between the algebras $A$ and $\Ext_A(S,S)$ for $S \simeq A/\rad A$ when $A$ is a Koszul algebra, see {\cite{BGS,  GRS}}.  This duality does not hold for  any algebra. However, under some assumptions on $A$, the $A_{\infty}$-algebra structure of the Ext-algebra carries enough information to recover the original algebra $A$, see \cite{LPWZ}  and \cite{K2}.

In the 1960s Gerstenhaber introduced the algebraic deformation theory of associative algebras. He studied and described properties of deformations and their relations with Hochschild cohomology groups.

In this work we restrict our attention to infinitesimal deformations. We consider the ring  of dual numbers $\K[t]/(t^2)$ for a field 
$\K$. Then, an infinitesimal deformation of an associative $\K$-algebra $A$ is an associative structure of $\K[t]/(t^2)$-algebra on $A[t]/(t^2)$ such that, when $t=0$, we recover the multiplication in $A$.
Gerstenhaber showed in \cite{G2} that, given an associative algebra $A$, its infinitesimal deformations are parametrized by the second Hochschild cohomology group $\HH^2(A)$ of $A$ with coefficients in itself.

In \cite{RRRV}, for any associative finite dimensional algebra $A$ over an algebraically closed field $\K$ and any Hochschild $2$-cocycle $f$, a description of the finite dimensional modules over  the infinitesimal deformation $A_f$ of $A$ as tuples $(M_0,M_1,T_M,f_M)$ was given.
This work was intended as an attempt to start studying the module category as well as its relation with the category of modules over the original algebra $A$. The aim of this paper is to describe the algebra structure of 
 $\Ext_{A_f}(S, S)$ in terms of $\Ext_{A}(S, S)$. The main theorems  (Theorem \ref{thm yoneda} and  \ref{thm yoneda b}) provide such description under additional assumptions on $f$.
For that, we first construct minimal projective resolutions of simple $A_f$-modules in terms of the corresponding minimal projective resolutions in $\mod A$. It is to be expected that
the $A_\infty$-structure of $\Ext_{A_f}(S, S)$ should be described in terms of $\Ext_{A}(S, S)$.

The paper is organized as follows. In the first section we introduce the basic concepts and notation. The second one is devoted  to the description of the structure of projective $A_f$-modules. In Section 3 we focus our attention on describing minimal projective resolutions of $A_f$-modules from minimal projective resolutions of $A$-modules.  Section 4 provides a detailed description of the liftings needed to calculate Yoneda products. In the last section we apply the previous results in order to get a description of the structure of the algebra $\Ext^*_{A_f}(S,S)$ in terms of the multiplication in $\Ext^*_{A}(S,S)$. 

The authors thank the referee for many helpful remarks and a detailed reading of the manuscript.

\section{Preliminaries}

\subsection{Deformations of associative algebras }\label{subsect:DeforAlg}

Consider the truncated polynomial ring $\K[t]/(t^{2})$. An \emph{infinitesimal deformation} of an associative $\K$-algebra $A$ is an associative $\K[t]/(t^{2})$-algebra structure on $A[t]/(t^{2})$ such that modulo the ideal generated by $t$, the multiplication corresponds to that on $A$. More precisely:
\begin{definition}\label{def:inf def}
	Let  
	$f \in \Hom_\K(A \otimes_\K A, A)$.
	Let $A_f \colon = A[t]/(t^{2})\simeq A\oplus A$ be the algebra with multiplication $\mu$  given by
	\begin{align*}
		\mu(a_0+a_1t, b_0+b_1t) = a_0 b_0 + (a_0 b_1 + a_1 b_0 + f(a_0, b_0))t.
	\end{align*}
	If this product is associative we say that $A_f$ is an \emph{infinitesimal deformation of $A$}.
\end{definition}

 A classical result tells us that 
	the product $\mu$ defines an infinitesimal deformation if and only if $f$ is a Hochschild $2$-cocycle. \\

It is well known that any finite dimensional algebra $A$ over an algebraically closed field $\K$ is Morita equivalent to a quotient of a path algebra, that is, $A$ is Morita equivalent to $\K Q/I$.
A presentation by quiver and relations for $A_f$ and a proof of the good behaviour of deformations under Morita equivalence can be found in \cite{RRRV}. 

\subsection{$A_f$-modules}

In \cite{RRRV} it has been proved that the category  $\mod A_f$ of finite dimensional modules over $A_f$ is equivalent to the category $\C_f$  whose objects are tuples $(M_0,M_1,T_M,f_M)$ with
$M_0, M_1 \in \mod A$, $T_M \in \Hom_A(M_0, M_1)$ a monomorphism and $f_M \in \Hom_\K (A \otimes_\K M_0, M_1)$, satisfying the following condition
\begin{align} \label{eqn:1a}
	af_M(b \otimes m_0) - f_M (ab \otimes m_0) + f_M(a \otimes bm_0) - f(a \otimes b) T_M(m_0) =0
\end{align}
for  $a,b \in A, m_0 \in M_0$. More precisely, the $A_f$-module associated to $(M_0,M_1,T_M,f_M)$ is the $\K$-vector space 
$M_0 \oplus M_1$ with $A_f$-module structure given by 
\begin{align*}
		(a+bt)(m_0, m_1) = (am_0 , am_1 + b T_M(m_0) + f_M(a \otimes m_0))
	\end{align*}
	for $a+bt \in A_f, m_0 \in M_0$ and $m_1 \in M_1$.

The morphisms between objects $(M_0,M_1,T_M,f_M)$ and $(N_0,N_1,T_N,f_N)$ are triples of morphisms $(u_0, u_1, u_2)$ where $u_0 \in \Hom_A(M_0, N_0)$, $u_1 \in \Hom_\K (M_0, N_1)$ and $u_2 \in \Hom_A (M_1, N_1)$ are such that the diagram
\begin{align*}
	\xymatrix{
		M_0 \ar[d]_{T_M} \ar[r]^{u_0} & N_0  \ar[d]^{T_N}\\
		M_1  \ar[r]^{u_2}  & N_1 }
\end{align*}
commutes and, for any $a \in A, m_0 \in M_0$, 
\begin{align}\label{eq:u_1}
	u_1(a m_0) & = a u_1 (m_0) - u_2 (f_M(a \otimes m_0)) + f_N(a \otimes u_0(m_0))
\end{align}
and they correspond to the $A_f$-morphisms $u: M_0 \oplus M_1 \to  N_0 \oplus N_1$ given by 
\begin{align*}
		u(m_0,m_1) = (u_0(m_0) , u_1(m_0) + u_2(m_1))
	\end{align*}
	for $m_0 \in M_0$ and $m_1 \in M_1$. 
The composition is given by
$$(v_0, v_1, v_2) (u_0, u_1, u_2)= (v_0 u_0, v_2 u_1 + v_1 u_0, v_2 u_2)$$
and the identity morphism is $(\Id, 0, \Id)$.

Throughout this article we identify  $\mod A_f$ with $\C_f$. Moreover, there is a full embedding $\mod A \hookrightarrow \mod A_f$ defined by $M \mapsto (0,M,0,0)$, that is, $\mod A$ can be identified with a full subcategory of $\mod A_f$.  By abuse of notation we will denote by $M$ the $A_f$-module $(0,M,0,0)$.

\section{ Projective $A_f$-modules}

From now on, we assume the algebra $A$ is finite dimensional.
Let $A = \oplus_{i=1}^n P_i$ be the decomposition in indecomposable modules.  By classical theory on associative algebras we have that $\{ P_1, \dots , P_n \}$ is a complete set of representatives of isomorphism 
classes of indecomposable projective $A$-modules.  More precisely, $1 = \sum_{i=1}^n e_i$, $P_i= A e_i$ and $\{ e_1, \dots , e_n \}$ is a complete set of primitive orthogonal idempotents. This decomposition allows us to 
describe, up to isomorphism, all the simple modules, that is, $\{ S_1, \dots , S_n \}$ is a complete set of representatives of indecomposable simple $A$-modules, where
$S_i = P_i / \rad P_i$ and $\rad P_i = \rad A . P_i$.

 For any $A$-module $M$, there is an isomorphism of vector spaces 
$$\Hom_A (A e_i, M)  \simeq e_i M$$
given by $h(a e_i) = a h(e_i)$ with $h(e_i) = e_i h(e_i) \in e_i M$. In particular,
morphisms between indecomposable projective modules are uniquely determined by elements in $A$, that is,  
\begin{equation} \label{matriz} \Hom_A (A e_i, Ae_j)  \simeq e_i A e_j.
\end{equation}

\subsection{Projective modules}

 The direct sum of the $A_f$-modules $(M_0,M_1,T_M,f_M)$ and $(N_0,N_1,T_N,f_N)$ is isomorphic to 
\[(M_0\oplus N_0, M_1\oplus N_1, T_{M\oplus N}, f_{M\oplus N})\]
where  $T_{M\oplus N}(m_0+n_0)=T_M(m_0)+T_N(n_0)$ and $f_{M\oplus N}(a\otimes (m_0+n_0))= f_{M}(a\otimes m_0)+f_{N}(a\otimes n_0)$. In particular
\[ A_f = (A,A, \Id, f) \simeq \bigoplus_{i=1}^n (P_i, P_i, \Id, f_{P_i}) \]
where $f_{P_i} \colon A \otimes P_i   \to P_i$ is given by $f_{P_i}(a \otimes be_i) = f(a \otimes b) e_i$. 
Hence $\{ \hat P_1, \dots , \hat P_n \}$ is a complete set of representatives of indecomposable projective $A_f$-modules, with 
$\hat P= (P,P, \Id, f_P)$.  From now on we use this notation for projective $A_f$-modules.

\begin{proposition}\label{u_1}
Let $P=Ae_i, Q=Ae_j$ be  indecomposable projective $A$-modules. Then $(u_0, u_1, u_2) \in \Hom_{A_f}(\hat P, \hat Q)$ if and only if there exist $b, c \in e_iAe_j$ such that
\begin{align*}
u_0(ae_i) & = u_2(ae_i) = a b, \\
 u_1(ae_i) & = f(a \otimes b)e_j + a c
 \end{align*}
 for any $a \in A$.
\end{proposition}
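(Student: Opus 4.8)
The plan is to analyse the two defining conditions of a morphism in $\C_f$ one at a time. Since the structure maps of $\hat P=(P,P,\Id,f_P)$ and $\hat Q=(Q,Q,\Id,f_Q)$ are identities, the commuting square $T_Q u_0=u_2 T_P$ degenerates to $u_0=u_2$. As $u_0=u_2\in\Hom_A(Ae_i,Ae_j)$, the identification \eqref{matriz} yields a unique $b\in e_iAe_j$ with $u_0(ae_i)=u_2(ae_i)=ab$ for all $a\in A$, where I use that $e_ib=b$. All the remaining content therefore sits in the compatibility \eqref{eq:u_1} for the $\K$-linear map $u_1$.

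The key observation is that $P=Ae_i$ is cyclic on the generator $e_i$, so \eqref{eq:u_1} applied with $m_0=e_i$ and an arbitrary scalar $a$ expresses $u_1(ae_i)$ solely in terms of $c:=u_1(e_i)$. Substituting the explicit values $f_P(a\otimes e_i)=f(a\otimes e_i)e_i$, $u_2$ as right multiplication by $b$, and $f_Q(a\otimes b)=f(a\otimes b)e_j$, I expect to obtain $u_1(ae_i)=ac-f(a\otimes e_i)\,b+f(a\otimes b)e_j$. To see that this matches the claimed formula $u_1(ae_i)=f(a\otimes b)e_j+ac$ and that $c\in e_iAe_j$, I would feed the idempotent relation $e_ie_i=e_i$ back into the compatibility (that is, evaluate it at $a=m_0=e_i$): this pins down the component $(1-e_i)c$ and reduces the whole discrepancy to showing $f(a\otimes e_i)\,b=0$ for $b\in e_iAe_j$. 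The latter follows from the Hochschild $2$-cocycle condition for $f$ applied to the triple $(a,e_i,b)$ once one exploits $e_ib=b=be_j$, and is cleanest under the standard normalization $f(e_i\otimes -)=f(-\otimes e_i)=0$ on the vertices, which also guarantees that $f_P$ and $u_1$ are well defined on $Ae_i$.

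For the converse I would start from arbitrary $b,c\in e_iAe_j$, set $u_0=u_2$ to be right multiplication by $b$, and define $u_1(ae_i)=f(a\otimes b)e_j+ac$. Here I must check four things: that $u_1$ is well defined on $Ae_i$ (the summand $ac$ is harmless because $c=e_ic$, while $f(a\otimes b)e_j$ is handled by the same cocycle computation as above), that $u_1$ is $\K$-linear, that the square commutes (immediate, as $u_0=u_2$ and the structure maps are identities), and finally that \eqref{eq:u_1} holds — equivalently, that the triple is compatible with \eqref{eqn:1a} for the objects involved. This last verification again collapses to the $2$-cocycle identity for $f$.

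The step I expect to be the real obstacle is precisely this cocycle-and-idempotent bookkeeping: translating the abstract compatibility into honest equalities in $A$ forces one to track the idempotents $e_i,e_j$ together with $b=e_ibe_j$ at every stage, and the decisive cancellations use that $f$ is a $2$-cocycle rather than mere associativity. A useful guide is the conceptual identification $\Hom_{A_f}(\hat P,\hat Q)\simeq e_iA_fe_j=e_iAe_j\oplus e_iAe_j\,t$ obtained by applying \eqref{matriz} to $A_f$ itself: writing a homomorphism as right multiplication by $b+ct$ and expanding the product with the multiplication $\mu$ of Definition \ref{def:inf def} should reproduce exactly the pair $(b,c)$ and the three components $u_0,u_1,u_2$, giving an independent check on the formulas.
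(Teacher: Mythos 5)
Your handling of the first condition is exactly the paper's: the square forces $u_0=u_2$, and \eqref{matriz} gives right multiplication by some $b\in e_iAe_j$. The gap is in how you extract $c$. You evaluate \eqref{eq:u_1} at the generator $m_0=e_i$, obtaining $u_1(ae_i)=a\,u_1(e_i)+f(a\otimes b)e_j-f(a\otimes e_i)b$, and then claim that $f(a\otimes e_i)b=0$ follows from the cocycle identity on the triple $(a,e_i,b)$. It does not: that identity (using $e_ib=b$) gives $f(a\otimes e_i)b=af(e_i\otimes b)-f(ae_i\otimes b)+f(a\otimes b)$, which is nonzero in general. Concretely, take $A=\K\times\K$ and $f(x\otimes y)=xg(y)-g(xy)+g(x)y$ the coboundary of the linear map with $g(e_1)=e_1$, $g(e_2)=0$; then $f(e_1\otimes e_1)e_1=e_1\neq 0$, and for this $f$ the correct constant is $c=u_1(e_i)-f(e_i\otimes b)e_j$, not $u_1(e_i)$. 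Retreating to the normalization $f(-\otimes e_i)=0=f(e_i\otimes -)$ does kill the term, but that is an additional hypothesis not present in the proposition, which is asserted for an arbitrary Hochschild $2$-cocycle.

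The paper's proof avoids this by never evaluating at the generator: it applies \eqref{eq:u_1} to a product $a'\cdot(ae_i)$ and uses the cocycle identity on the triple $(a',a,b)$ --- where the product $ab$ actually occurs --- to show that $g(ae_i)=u_1(ae_i)-f(a\otimes b)e_j$ satisfies $g(a'(ae_i))=a'g(ae_i)$, i.e.\ is a morphism of $A$-modules; then \eqref{matriz} hands you $c$ with no further computation. In other words, the correction term $f(a\otimes b)e_j$ has to be subtracted \emph{before} invoking cyclicity of $P$, rather than reconstructed afterwards from $u_1(e_i)$. Your closing heuristic $\Hom_{A_f}(\hat P,\hat Q)\simeq e_iA_fe_j$ is a good guide and essentially equivalent to the paper's argument, but note that even viewing $e_i$ as an idempotent of $A_f$ already requires $f(e_i\otimes e_i)=0$, so as written it is a consistency check under normalization, not a proof of the stated result.
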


\begin{proof}
Let $(u_0, u_1, u_2) \in \Hom_{A_f}(\hat P, \hat Q)$.
We have seen that any  $u \in  \Hom_A (P,Q)$ is given by $u(ae_i)=ab$ for some $b \in e_iAe_j$, and it is clear that $u_0=u_2$. Using that $f$ is a $2$-cocycle we get that
\begin{align*}
u_1(a' (ae_i)) & = a' u_1 (a e_i) - u_2 (f_P(a' \otimes ae_i)) +  f_Q(a' \otimes u_0(ae_i))\\
& = a' u_1 (a e_i) - u_2 (f(a' \otimes a)e_i) +  f_Q(a' \otimes abe_j)\\
& = a' u_1 (ae_i) - f(a' \otimes a)b + f(a' \otimes ab)e_j\\
& = a' u_1(ae_i) - a' f(a \otimes b)e_j + f(a'a \otimes b)e_j,
\end{align*}
so the map $g \colon P \to Q$ given by $g(ae_i)= u_1(ae_i) - f(a \otimes b)e_j$ is a morphism of $A$-modules, that is, $u_1(ae_i) - f(a \otimes b)e_j = ac$ for some $c \in e_iAe_j$. The converse is straightforward.
\end{proof}

 In order to get a better description of $A_f$-morphisms between projective modules, we define $\tilde{f}: M_{r\times s}(A)\otimes M_{s\times t}(A)\to M_{r\times t}(A)$ by 
 \[ \tilde{f}(B\otimes B')_{ij}=\sum_{l=1}^{s} f(b_{il}\otimes b'_{lj}),\]
for any $B=(b_{il})\in M_{r\times s}(A)$ and $B'=(b'_{lj})\in M_{s\times t}(A)$.
A direct computation shows that $\tilde f$ satisfies the $2$-cocycle condition, that is, 
$$B_0 \tilde f (B_1 \otimes B_2) - \tilde f ( B_0 B_1 \otimes B_2) +  \tilde f ( B_0  \otimes B_1 B_2) - \tilde f ( B_0  \otimes B_1) B_2 =0$$
for any matrices $B_0, B_1, B_2$ of appropriate sizes.

 For any projective $A$-module $Q$ we consider its decomposition in indecomposable projective modules $Q = \oplus_{r=1}^m A e_{i_r}$, we denote 
 $[x] = [x_1 \cdots x_m]$ for any $x=\sum x_r \in Q, x_r \in  A e_{i_r}$,  and we set $E \in M_m(A)$ the diagonal matrix defined by
\[(E)_{st} = 
\begin{cases}
e_{i_s} & \mbox{if $s=t$}, \\
0 & \mbox{otherwise.}
\end{cases}\]

	\begin{proposition}
Let $Q = \oplus_{r=1}^m A e_{i_r} , Q'= \oplus_{r=1}^n A e_{j_r}$ be two projective $A$-modules, $E,E'$ the diagonal matrices associated to them.
Then $(\delta_0,  \delta_1, \delta_2) \in \Hom_{A_f}( \hat Q, \hat Q')$ if and only if there exists
a pair of matrices $B, C \in M_{m \times n}(A)$ such that $EBE'=B$ and $ECE'=C$, and 
\begin{align*}
[\delta_0(x)]&= [\delta_2(x)]= [x] B, \\
[\delta_1(x)] &= \tilde f([x] \otimes B)  E' +[ x ]C
\end{align*}
 for any $x \in Q$.
\end{proposition}

\begin{proof} 
We know that any morphism between projective $A$-modules is given by right multiplication by a matrix with coefficients in $A$. Indeed, using the isomorphism given in \eqref{matriz}, we conclude that 
$[\delta (x)] = [x] B$ for $B=(b_{st}) \in  M_{m \times n}(A)$ with $b_{st} \in e_{i_s} A e_{j_t}$.
Hence 
\[ \Hom_A(Q, Q') \simeq \{ B \in M_{m \times n}(A) : EBE'=B \} .\] 
Now the statement follows by Proposition \ref{u_1}.
\end{proof}

\begin{proposition}\label{cero}  Let $P, M$ be $A$-modules, $P$ projective. The space $\Hom_{A_f}(\hat P, M)$ is isomorphic to $\Hom_A(P,M)$.
\end{proposition}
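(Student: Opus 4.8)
The plan is to simply unwind the description of morphisms in $\C_f$ recalled in Section~1, with the source taken to be the projective $A_f$-module $\hat P = (P,P,\Id,f_P)$ and the target the $A_f$-module $M = (0,M,0,0)$. First I would write a general element of $\Hom_{A_f}(\hat P, M)$ as a triple $(u_0,u_1,u_2)$ with $u_0 \in \Hom_A(P,0)$, $u_1 \in \Hom_\K(P,M)$ and $u_2 \in \Hom_A(P,M)$. Since the first component of the target is $0$, we get $u_0 = 0$ at once. I would then invoke the commutativity of the defining square, which here reads $T_N u_0 = u_2 T_M$; as $T_N = 0$ and $T_M = \Id$, this forces $u_2 = u_2\,\Id = 0$.

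With $u_0 = 0$, $u_2 = 0$ and $f_N = 0$, condition \eqref{eq:u_1} collapses to $u_1(am_0) = a\,u_1(m_0)$ for all $a \in A$ and $m_0 \in P$; that is, $u_1$ is precisely an $A$-module morphism $P \to M$. Hence every morphism $\hat P \to M$ has the form $(0,u_1,0)$ with $u_1 \in \Hom_A(P,M)$, and conversely any such triple satisfies both defining conditions and is therefore a morphism. The assignment $(0,u_1,0) \mapsto u_1$ is evidently $\K$-linear and bijective, which yields the desired isomorphism $\Hom_{A_f}(\hat P, M) \simeq \Hom_A(P,M)$.

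The argument is entirely formal, amounting to substituting the zero data of the target into the two conditions defining a morphism in $\C_f$, so I do not anticipate a genuine obstacle. The only role of the projectivity hypothesis is to guarantee that $\hat P = (P,P,\Id,f_P)$ is the $A_f$-module attached to $P$ as in Section~2; the computation itself does not use projectivity beyond this.
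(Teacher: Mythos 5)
Your proof is correct and follows essentially the same route as the paper's: the zero first component and the commuting square force $u_0=u_2=0$, after which condition \eqref{eq:u_1} reduces to $A$-linearity of $u_1$. You simply spell out the details that the paper leaves implicit.
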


\begin{proof}  If $(u_0, u_1, u_2) \colon (P,P, \Id, f_P) \to (0, M, 0, 0)$ is an $A_f$-morphism then $u_2=u_0=0$, and  the $\K$-linear map $u_1\colon P \to M$ is a morphism of $A$-modules.
\end{proof}

\subsection{Simple modules}

In \cite{RRRV} we have seen that $\rad A_f= (\rad A, A, \inc, f)$ and hence $\rad \hat P= (\rad P, P, \inc, f_P)$, for any projective $A$-module $P$. Set $S= P/\rad P$.
The exactness of the sequence 
\begin{equation}\label{exacta}
0 \longrightarrow (\rad P, P,  \inc, f_P) \xrightarrow {(\inc, \inc, \Id) }
 (P,P,\Id, f_P) \xrightarrow{(0,  \pi,  0)}  (0,S, 0,0) \to  0 
 \end{equation}
implies that 
$\{ S_1, \dots , S_n \}$ is a complete set of representatives of indecomposable simple $A_f$-modules, 
  where $\pi: P \to S$ denotes the canonical epimorphism.

\begin{remark}
From the previous short exact sequence and the description of the projective $A_f$-modules we can conclude that $A_f$ always has infinite global dimension, since $\hat P$ has even dimension as a vector space, and $S$ and the consecutive kernels have odd dimension.
\end{remark}

 \section{Projective resolutions}
 
The aim of this section is to construct projective resolutions for any $A_f$-module.  Since for any $A_f$-module $(M_0,M_1,T_M,f_M)$ we have a short exact sequence
\[ 0 \to (0, M_1, 0, 0 ) \to (M_0,M_1,T_M,f_M) \to (0, M_0, 0, 0) \to 0,\]
it is enough  to construct projective resolutions for any $A_f$-module $M=(0,M,0,0)$, see  \cite[Horseshoe Lemma 2.2.8] {W}.

\subsection{Examples}
We illustrate our desired results with some specific examples that show that  we can construct minimal projective resolutions for any $A_f$-module $M=(0,M,0,0)$ in terms of minimal projective resolutions of the $A$-module $M$.

By abuse of notation we will denote by $S_i$ the simple module associated to the vertex $i$ in $A$ and $A_f$, that is $S_i=(0,S_i,0,0) $. Also, we denote by $P_i$ and $\hat P_i$ the indecomposable projective modules associated with the vertex $i$ over $A$ and $A_f$ respectively.

\begin{example}\label{example 1}  Let $A=\K Q/I$ with  \[   Q: \xymatrix{ 
		&  2  \ar[ld]_{\alpha_1 } &   \\  1    & & 4 \ar[lu]_{ \alpha_2 } \ar[ld]_{ \alpha_4 }\\
		&  3 \ar[lu]_{\alpha_3}  & } 
		\] 
		and  $I=<\alpha_1\alpha_2>$. The minimal projective resolutions of the simple $A$-modules  are 
		\begin{align*}
		 &0 \to  P_1 \to S_1  \to 0, \\
		 & 0 \to P_1  \to  P_2  \to S_2  \to 0,\\
		 & 0 \to P_1  \to  P_3  \to S_3  \to 0,\\
		  & 0 \to P_1 \to P_2\oplus P_3   \to  P_4  \to S_4 \to 0.
		\end{align*}
	Let $\mathcal B$ be the basis of $A$ given by paths. Let $f \in \Hom_\K(A \otimes A, A)$ be defined on the basis $\mathcal B \otimes \mathcal B$ by 
	\[f(\alpha_1 \otimes \alpha_2)=\alpha_3\alpha_4\] and zero otherwise. A direct computation over the basis $\mathcal B \otimes \mathcal B \otimes \mathcal B$ shows that  $f$ is  a Hochschild $2$-cocycle.
	The infinitesimal deformation $A_f$ (see \cite{RRRV}) is presented by the quiver 
	\[   Q_f: \xymatrix{ 
			&  2 \ar@(ul,ur)^{\gamma_2} \ar[ld]_{\alpha_1 } &   \\  1 \ar@(dl,ul)^{\gamma_1}   & & 4 \ar@(ur,dr)^{\gamma_4} \ar[ld]_{ \alpha_4 } \ar[lu]_{ \alpha_2 }\\ &  3 \ar@(dl,dr)_{\gamma_3} \ar[lu]_{\alpha_3}  & } \]
			 and the ideal $I_f=<\gamma_1^2, \gamma_2^2, \gamma_3^2, \gamma_4^2, \gamma_1\alpha_1-\alpha_1\gamma_2,  \gamma_2\alpha_2-\alpha_2\gamma_4, \gamma_1\alpha_3-\alpha_3\gamma_3, \gamma_3\alpha_4-\alpha_4\gamma_4, \alpha_1\alpha_2-\alpha_3\alpha_4\gamma_4>$. 
			The minimal projective resolutions of the simple $A_f$-modules are 
	\begin{align*}
	&\cdots \to \hat P_1 \to \hat P_1  \to  \hat P_1  \to  S_1  \to 0 ,\\
	& \cdots \to \hat P_2 \oplus \hat P_1 \to \hat P_2 \oplus \hat P_1   \to \hat P_2   {\longrightarrow} S_2 \to 0,\\
		& \cdots \to \hat P_3 \oplus \hat P_1 \to \hat P_3 \oplus \hat P_1   \to \hat P_3   {\longrightarrow} S_3 \to 0,\\
	& \cdots \to \hat P_4 \oplus \hat P_2 \oplus \hat P_3  \oplus \hat P_1 \ \to \hat P_4 \oplus \hat P_2 \oplus \hat P_3   \to  \hat P_4  \to S_4 \to 0.
	\end{align*} 
\end{example}

 \begin{example}\label{example 2} Let $A=\K Q/I$ with  \[  Q:  \xymatrix{
   1 \ar@<6pt>[rr]^{ \alpha_2} & &     2    \ar[ll]^{\alpha_1}  } \]
   and $I$ generated by $\alpha_1\alpha_2\alpha_1, \alpha_2\alpha_1\alpha_2$. The minimal projective resolutions of the simple $A$-modules  are  
		\begin{align*}
		&  \cdots  \to P_1 \to  P_1  \to P_2 \to P_2 \to P_1 \to S_1  \to 0 ,\\
		&  \cdots \to P_2 \to  P_2  \to P_1 \to P_1 \to P_2 \to S_2  \to 0 .
		\end{align*}
Let $\mathcal B$ be the basis of $A$ given by paths. Let $f \in \Hom_\K(A \otimes A, A)$ be the Hochschild $2$-cocycle defined on $\mathcal B \otimes \mathcal B$ by
 \[f(a \otimes b) = \begin{cases}
	 a'\alpha_1 , &\mbox{if $ab=a'\alpha_1\alpha_2\alpha_1 $}, \\
	 a'\alpha_2 , &\mbox{if $ab=a'\alpha_2\alpha_1\alpha_2 $}, \\
	0, &\mbox{otherwise.}
	\end{cases} \]
The infinitesimal deformation $A_f$  is presented by the quiver 
	\[   Q_f: \xymatrix{
	   1 \ar@(dl,ul)^{\gamma_1}\ar@<6pt>[rr]^{ \alpha_2} & &     2   \ar@(ur,dr)^{\gamma_2}  \ar[ll]^{\alpha_1}  }
	\]
			 and the ideal $I_f=<\gamma_1^2, \gamma_2^2,  \gamma_1\alpha_1-\alpha_1\gamma_2, \gamma_2\alpha_2-\alpha_2\gamma_1,  \alpha_1\alpha_2\alpha_1-\alpha_1\gamma_2, \alpha_2\alpha_1\alpha_2-\alpha_2\gamma_1>$.
The minimal projective resolutions of the simple $A_f$-modules are  
\begin{align*}
& \cdots \to  \hat P_1 \oplus \hat P_2 \oplus \hat P_2 \oplus \hat P_1 \to \hat P_1 \oplus \hat P_2 \oplus \hat P_2    \to \hat P_1 \oplus \hat P_2  \to \hat P_1 \to S_1  \to 0,\\ 
&  \cdots \to  \hat P_2 \oplus \hat P_1 \oplus \hat P_1 \oplus \hat P_2 \to \hat P_2 \oplus \hat P_1 \oplus \hat P_1    \to \hat P_2 \oplus \hat P_1  \to \hat P_2 \to S_2  \to 0. 
\end{align*} 
\end{example}

\bigskip

\begin{example} 
Let $A=\K Q/I$ with  
\[ Q: 
\xymatrix{
 1 \ar@/^{9mm}/[rrr]^{\beta_1} \ar@/^{4mm}/[rrr]^{\beta_2}   & & & 2 \ar@/^{9mm}/[lll]_{\alpha_1} \ar@/^{4mm}/[lll]_{\alpha_2}   }
 \]  
 and $I$ generated by $\beta_1\alpha_1, \beta_2\alpha_1, \beta_2\alpha_2, \alpha_2\beta_1$. The minimal projective resolution of the simple $A$-module $S_2$ is 
		\begin{align*}
		 & 0 \to P_2\to P_1\to  P_2\oplus P_2\oplus P_2 \to  P_1\oplus P_1 \to P_2 \to S_2 \to 0.
		\end{align*}
		Let $\mathcal B$ be the basis of $A$ given by paths. Let $f \in \Hom_\K(A \otimes A, A)$ be the Hochschild $2$-cocycle defined on $\mathcal B \otimes \mathcal B$ by	
		\[ 
		f(a \otimes b) = 
		\begin{cases}
	a' e_2 b', & \mbox{if $ab= a' \beta_2\alpha_1 b'$}, \\
	0, & \mbox{otherwise.}
	\end{cases} \]
	The infinitesimal deformation $A_f$  is presented by the quiver 
		\[   Q_f: \xymatrix{
			 1 \ar@(dl,ul)^{\gamma_1}\ar@/^{9mm}/[rrr]^{\beta_1} \ar@/^{4mm}/[rrr]^{\beta_2}   & & & 2 \ar@/^{9mm}/[lll]_{\alpha_1} \ar@/^{4mm}/[lll]_{\alpha_2}
  		}\]
				 and the ideal $I_f=<\gamma_1^2, \beta_2\alpha_1\beta_2\alpha_1,  {\beta_1\alpha_1},
				\beta_2\alpha_2, \alpha_2\beta_1, \beta_2\alpha_1\beta_2-\beta_2\gamma_1,
				   \beta_2\alpha_1\beta_1-\beta_1\gamma_1,
				   \gamma_1\alpha_1-\alpha_1\beta_2\alpha_1,
				   \gamma_1\alpha_2-\alpha_2\beta_2\alpha_1>$.
				 One can check that the minimal projective resolution of the simple $A_f$-module $S_2$ is
	\begin{align*}
			  \dots \to \hat P_1\oplus \hat P_1 \oplus \hat P_2\oplus & \hat P_2 \oplus \hat P_2\oplus \hat P_1 \to \hat P_1\oplus \hat P_1 \oplus \hat P_2\oplus \hat P_2 \oplus \hat P_2\oplus \hat P_1  \\
			 & \to \hat P_1\oplus \hat P_1 \oplus \hat P_2\oplus \hat P_2 \oplus \hat P_2 \to  \hat P_1\oplus \hat P_1 \to \hat P_2 \to S_2 \to 0.
			\end{align*} 

\end{example}

 \begin{example} 
Let $A=\K Q/I$ with  $Q$:  \[   \xymatrix{
		&  2 \ar[ld]_{\alpha_1 } &   \\  1 \ar[rr]_{\alpha_3}  & &   3 \ar[lu]_{ \alpha_2 }} \] and  $I$ generated by $\alpha_1\alpha_2\alpha_3, \alpha_2\alpha_3\alpha_1\alpha_2$. The minimal projective resolution of the simple $A$-module $S_1$ is 
		\begin{align*}
		 & 0 \to  P_1 \to P_2 \to  P_1 \to P_3 \to P_1 \to S_1 \to 0.
		\end{align*}
		Let $\mathcal B$ be the basis of $A$ given by paths. Let $f \in \Hom_\K(A \otimes A, A)$ be the Hochschild $2$-cocycle defined on $\mathcal B \otimes \mathcal B$ by
			\[f(a \otimes b) = \begin{cases}
				a'e_1, &\mbox{if $ab=a'\alpha_1\alpha_2\alpha_3 $}, \\
				a'\alpha_2, &\mbox{if $ab=a'\alpha_2\alpha_3\alpha_1\alpha_2$}, \\
				a' \alpha_3, &\mbox{if $ab=a'\alpha_1\alpha_2\alpha_3\alpha_1$},\\
				0, &\mbox{otherwise.}
			\end{cases} \]

		The infinitesimal deformation $A_f$  is presented by the quiver 
			\[   Q_f: \xymatrix{
					&  2 \ar@(ul,ur)^{\gamma_2} \ar[ld]_{\alpha_1 } &   \\  1 \ar[rr]_{\alpha_3}  & &   3 \ar@(ur,dr)^{\gamma_3} \ar[lu]_{ \alpha_2 }  }
					\]
					 and the ideal $I_f=<\alpha_1\alpha_2\alpha_3\alpha_1\alpha_2\alpha_3, \gamma_2^2, \gamma_3^2, 
					 \alpha_1\gamma_2-\alpha_1\alpha_2\alpha_3\alpha_1, \alpha_2\gamma_3-\alpha_2\alpha_3\alpha_1\alpha_2,
					 \gamma_2\alpha_2-\alpha_2\gamma_3, 
					 \gamma_3\alpha_3-\alpha_3\alpha_1\alpha_2\alpha_3 >$.
					   One can check that the minimal projective resolution of the simple $A_f$-module $S_1$ is	
	\begin{align*}
			 & \dots \to  \hat P_3 \oplus \hat P_2 \oplus \hat P_1  \to  \hat P_3 \oplus \hat P_2 \to  \hat P_3 \oplus \hat P_1 \to \hat P_3 \to \hat P_1 \to S_1 \to 0.
			\end{align*} 
\end{example}

\begin{example} \label{example 3.5}
Let $r \geq 2$ and $A=\K Q/I$ with
\[   Q: 
 \xymatrix{  
		1 \ar@(ur,dr)^{\alpha}   } 
		\] 
		and  $I=<\alpha^r>$. The minimal projective resolution of the simple $A$-module $S_1$ is 
		\begin{align*}
		 & \cdots \to P_1 \to P_1  \to  P_1 \to  S_1  \to 0.
		\end{align*}
		Let $\mathcal B=\{ \alpha^i\}_{0 \leq i <r}$.  
Let $f \in \Hom_\K(A \otimes A, A)$ be the Hochschild $2$-cocycle defined on the basis $\mathcal B \otimes \mathcal B$ by
		\[
		f(\alpha^i \otimes \alpha^j) = 
		\begin{cases}
	\alpha^{i+j-r}, & \mbox{if $i+j \geq r$}, \\
	0, & \mbox{otherwise.}
	\end{cases}
	 \]
	The infinitesimal deformation $A_f$ is presented by the quiver $Q_f=Q$ and the ideal $I_f=<\alpha^{2r}>$. One can check that the minimal projective resolution of the simple $A_f$-module $S_1$ is
	\begin{align*}
	& \cdots \to \hat P_1 \to \hat P_1  \to  \hat P_1 \to  S_1  \to 0 .
	\end{align*} 
Hence, in this particular case, the Ext-algebras associated to $A$ and $A_f$ are isomorphic as vector spaces since 
\[ \Ext^n_{A_f}(S,S) \simeq  \Hom_{A_f}(\hat P_1, S) = \Hom_A(P_1, S) \simeq     \Ext^n_{A}(S,S). \]
Moreover, if $r \geq 3$, they are isomorphic as algebras but not as $A_\infty$-algebras, see \cite[Example 6.3]{LPWZ}.
\end{example}

	\begin{example} \label{example 3.6}
Let $r \in \mathbb N$, $A=\K Q/I$ with  $Q$: 
 \[   \xymatrix{
	&  2 \ar[ld]_{\alpha_1 }  &  3 \ar[l]_{ \alpha_2 }  &  \\ 
	 1 \ar[rd]_{ \alpha_n }   &  &  & j \ar@{.>}@/_1pc/[lu]  \\
	&  n \ar[r]_{ \alpha_{n-1} }  & n-1   \ar@{.>}@/_1pc/[ru]  & } \]
	 and  $I$ generated by $(\alpha_i\alpha_{i+1}\cdots\alpha_{i-1})^r$ for $i=1,\dots, n$, where the subscripts are taken modulo $n$.  The minimal projective resolution of the simple $A$-module $S_i$ is 
\begin{align*}
& \dots \to  P_i \to P_{i-1} \to  P_i \to P_{i-1} \to P_i \to S_i \to 0.
\end{align*}
	Let $\mathcal B$ be the basis of $A$ given by paths. Let $f \in \Hom_\K(A \otimes A, A)$ be the Hochschild $2$-cocycle defined on $\mathcal B \otimes \mathcal B$ by
	\[f(a \otimes b) = \begin{cases}
		a' e_i , &\mbox{if $ab=a'(\alpha_i\alpha_{i+1}\cdots\alpha_{i-1} )^r$}, \, i=1,\dots,r \\
				0, &\mbox{otherwise.}
	\end{cases} \]
The infinitesimal deformation $A_f$ is presented by the quiver $Q_f=Q$ and the ideal $I_f$ generated by $(\alpha_i\alpha_{i+1}\cdots\alpha_{i-1})^{2r}$ for $i=1,\dots, n$. One can check that the minimal projective resolution of the simple $A_f$-module $S_1$ is
\begin{align*}
	& \dots \to  \hat P_i \to \hat P_{i-1} \to  \hat P_i \to  \hat P_{i-1} \to \hat P_i \to S_i \to 0.
\end{align*} 
\end{example}

\bigskip

In the previous examples we have constructed a minimal projective resolution for the $A_f$-module $M=(0,M,0,0)$ using the projectives appearing in a minimal projective resolution of the $A$-module $M$. More precisely, 
if 
\[ \cdots  \to Q_1 \stackrel{\delta_1}  {\longrightarrow}  Q_0  \stackrel{\delta_0}  {\longrightarrow} M \to 0 \]
is a minimal projective resolution of $M$, we have constructed a minimal projective resolution of the $A_f$-module $M=(0,M,0,0)$ such that the 
projective appearing in degree $m$ is $\hat Q_m \oplus X$ where $X$ is a direct summand of $\oplus_{i=0}^{m-1}\hat Q_i$. \\

Even though it is complicated to find a general rule to {determine} which direct summand will appear in each degree, we will find sufficient conditions that give an answer 
for the cases $X=\oplus_{i=0}^{m-1}\hat Q_i$ or $X=0$,
that is, a minimal projective resolution of $M= (0,M,0,0)$ is given by
\begin{enumerate}
\item Case (A): \[ \dots \to \bigoplus_{i=0}^m \hat Q_i \stackrel{\hat \delta_m}{\longrightarrow} \bigoplus_{i=0}^{m-1} \hat Q_i \to \dots \to  \hat Q_0 \oplus \hat Q_1 \stackrel{\hat \delta_1}{\longrightarrow} \hat Q_0 \stackrel{\hat \delta_0}{\longrightarrow} M \to 0;\]
\item Case (B): \[ \dots \to \hat Q_m \stackrel{\hat \delta_m}{\longrightarrow}  \hat Q_{m-1} \to \dots \to  \hat Q_1 \stackrel{\hat \delta_1}{\longrightarrow} \hat Q_0 \stackrel{\hat \delta_0}{\longrightarrow} M \to 0.\]

\end{enumerate}

\subsection{Projective resolutions in case (A)}
Let $i>0$. 
If $\delta_i: Q_i\to Q_{i-1}$ are $A$-morphisms and $T_i: Q_i\to Q_{i-1}$ are linear maps, we define  $u_m, v_m \colon \bigoplus_{i=0}^m Q_i \to \bigoplus_{i=0}^{m-1} Q_i$ given by
\[ u_m = \begin{pmatrix}
0 & \delta_1 & 0 & \dots & 0 \\
\vdots & & & &  \vdots \\
0 & \dots & & 0 & \delta_m 
\end{pmatrix} , \qquad v_m = \begin{pmatrix}
\Id & T_1 & 0 & \dots & 0 \\
\vdots & & & &  \vdots \\
0 & \dots & & (-1)^{m-1} \Id  & (-1)^{m-1} T_m
\end{pmatrix}. \]

\begin{theorem} \label{resolucion}
Let $M$ be an $A$-module and let
\[ \cdots  \to Q_1 \stackrel{\delta_1}  {\longrightarrow}  Q_0  \stackrel{\delta_0}  {\longrightarrow} M \to 0 \]
be a minimal projective resolution of $M$. Suppose that  $T_i \colon Q_i \to Q_{i-1}$ are linear maps such that,  for any $i > 0$,
\begin{enumerate}
\item[(i)] \label{item:i} $T_i(ax) - a T_i(x) = (-1)^{i+1} (f_{Q_{i-1}}(a \otimes \delta_i(x)) - \delta_i (f_{Q_{i}}(a \otimes x)))$, \ $\forall a \in A, x \in Q_i$; 
\item[(ii)]   \label{item:ii} $T_i \delta_{i+1} = \delta_i T_{i+1}$. \label{ii}
\end{enumerate} 
If  $\hat \delta_0 = (0, \delta_0, 0)$ and
$\hat \delta_m = (u_m, v_m, u_m)$ for  $m \geq 1$, 
then 
\[ \dots \to \bigoplus_{i=0}^m \hat Q_i \stackrel{\hat \delta_m}{\longrightarrow} \bigoplus_{i=0}^{m-1} \hat Q_i \to \dots \to  \hat Q_0 \oplus \hat Q_1 \stackrel{\hat \delta_1}{\longrightarrow} \hat Q_0 \stackrel{\hat \delta_0}{\longrightarrow} M \to 0\]
is a minimal projective resolution of $M= (0,M,0,0)$.
 \end{theorem}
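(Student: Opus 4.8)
The plan is to check, in order, that each $\hat\delta_m$ is a morphism in $\mod A_f$, that the $\hat\delta_\bullet$ form a complex, that the complex is exact, and that it is minimal; exactness is the crux. That $\hat\delta_m=(u_m,v_m,u_m)$ is an $A_f$-morphism reduces, since the square for $T=\Id$ commutes automatically, to checking the compatibility \eqref{eq:u_1} for $(u_0,u_1,u_2)=(u_m,v_m,u_m)$: as $\bigoplus_{i=0}^m\hat Q_i$ carries the diagonal structure, $f$ acts slotwise through the $f_{Q_i}$, and in the $j$-th component of \eqref{eq:u_1} the terms from the identity blocks of $v_m$ cancel, leaving exactly hypothesis (i) with $i=j+1$ (the signs agreeing since $(-1)^{j}=(-1)^{j+2}$). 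For the complex relation the composition law $(v_0,v_1,v_2)(u_0,u_1,u_2)=(v_0u_0,v_2u_1+v_1u_0,v_2u_2)$ turns $\hat\delta_{m-1}\hat\delta_m=0$ into $u_{m-1}u_m=0$ and $u_{m-1}v_m+v_{m-1}u_m=0$; the first is $\delta_j\delta_{j+1}=0$ and the second reduces slotwise to $\alpha_{j+1}\delta_{j+2}=\delta_{j+1}\alpha_{j+2}$, i.e. hypothesis (ii), while $\hat\delta_0\hat\delta_1=0$ is $\delta_0\delta_1=0$.

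For exactness I would pass to underlying vector spaces, where exactness in $\mod A_f\simeq\C_f$ is detected: the underlying space of $(M_0,M_1,T,f_M)$ is $M_0\oplus M_1$, on which a morphism acts by the lower-triangular matrix $\left(\begin{smallmatrix}u_0&0\\ u_1&u_2\end{smallmatrix}\right)$ (read off from the composition law). Hence the $M_1$-copies form a subcomplex $C'$ and the $M_0$-copies the quotient complex $C''$, where on $R_m:=\bigoplus_{i=0}^m Q_i$ both differentials equal $u_m(x_0,\dots,x_m)=(\delta_1x_1,\dots,\delta_mx_m)$, the only difference being that $C''$ is augmented to $M$ by $\delta_0$. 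From the block-diagonal shape of $u_\bullet$ and the exactness of $(Q_\bullet,\delta_\bullet)$, every slot $j\ge1$ of $(R_\bullet,u_\bullet)$ is acyclic and only slot $0$ survives, giving $H_m(C')\cong Q_0/\ker\delta_0\cong M$ for all $m\ge0$, whereas the augmentation leaves $H_m(C'')\cong M$ only for $m\ge1$.

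The main obstacle is to see that these surviving copies of $M$ cancel, so that the total complex $C$ is acyclic. I would read this off the long exact homology sequence of $0\to C'\to C\to C''\to 0$, which reduces the vanishing of $H_\bullet(C)$ to the connecting map $\partial\colon H_m(C'')\to H_{m-1}(C')$ being an isomorphism for each $m\ge1$ (the remaining maps joining zero groups). By the usual description $\partial$ is induced by $v_m$, and on the surviving slot-$0$ class it sends $x_0\mapsto x_0+\alpha_1x_1$; the correction $\alpha_1x_1$ dies in $M=Q_0/\ker\delta_0$, since $x_1\in\ker\delta_1=\Im\delta_2$ gives $\alpha_1x_1=\alpha_1\delta_2w=\delta_1\alpha_2w\in\Im\delta_1=\ker\delta_0$, once more by (ii) and exactness. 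Thus $\partial$ is the identity of $M$ in each degree and $H_\bullet(C)=0$. Equivalently, one may verify $\ker\hat\delta_m=\Im\hat\delta_{m+1}$ by a direct chase, the same two inputs forcing the a priori free slot-$0$ component of a cycle to lie in $\ker\delta_0$.

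Finally, minimality means that every differential is a radical morphism and that $\hat\delta_0$ is a projective cover. By Proposition \ref{u_1} a block $\hat Q_k\to\hat Q_l$ fails to be radical only when its $b$-component is a unit, forcing $k=l$ and $b\notin\rad A$; but the diagonal blocks of $\hat\delta_m$ have $b=0$ (the identities of $v_m$ contribute only to the $c$-component), and its off-diagonal blocks have $b=\delta_k\in\rad A$ because $(Q_\bullet,\delta_\bullet)$ is minimal. Hence each $\hat\delta_m$ is radical, and since $\delta_0$ is a projective cover so is $\hat\delta_0$; therefore the resolution is minimal.
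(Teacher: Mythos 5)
Your proposal is correct, and three of its four steps (that each $\hat\delta_m$ is an $A_f$-morphism via condition (i), that the maps compose to zero via condition (ii), and minimality via the block description of Proposition \ref{u_1} together with $\Im\delta_i\subset\rad Q_{i-1}$) carry the same content as the paper's proof, the paper phrasing minimality instead as the vanishing of $(u_m,v_m,u_m)^*$ on $\Hom_{A_f}(-,S)$ for every simple $S$. Where you genuinely diverge is the exactness step. The paper does a direct element chase: given a cycle $(x_0,\dots,x_m,y_0,\dots,y_m)$ it picks $z_{m+1}$ with $\delta_{m+1}(z_{m+1})=x_m$ and writes down the explicit preimage $z_j=(-1)^jy_j-\alpha_{j+1}(z_{j+1})$, verifying it works using (ii) in every degree. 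You instead exploit the lower-triangular shape of the underlying linear maps to get the short exact sequence of complexes $0\to C'\to C\to C''\to 0$, compute that both $C'$ and $C''$ have homology $Q_0/\Ker\delta_0\simeq M$ in the surviving degrees, and reduce acyclicity of $C$ to the connecting map being an isomorphism; the computation $\partial[x]=[x_0+\alpha_1x_1]=[x_0]$, using $\alpha_1\delta_2=\delta_1\alpha_2$ and exactness of the original resolution, is correct. This route is more conceptual, isolates precisely where condition (ii) intervenes, and avoids the recursive verification, at the cost of some degree~$-1$ bookkeeping: since the augmentation $(0,\delta_0,0)$ kills the $M_1$-copy of $\hat Q_0$, one may indeed place $M$ in the quotient complex $C''$ as you do, but this should be said explicitly, and your description of $(R_\bullet,u_\bullet)$ as ``block-diagonal'' is loose (the slots shift, so the decomposition into acyclic pieces is along anti-diagonals $Q_\bullet$ shifted by $d$, not along slots), though the resulting identification $H_m(R_\bullet)\simeq Q_0/\Ker\delta_0$ is right. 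The paper's chase has the minor advantage of producing explicit preimages, which is in the spirit of the explicit liftings used later in the paper.
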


\begin{proof} It is clear that $\hat \delta_0$ is an epimorphism. A direct computation shows that $\hat \delta_0 \hat \delta_1=0$ since $\delta_0 \delta_1=0$. Let $m >0$. Condition  (i) 
implies that $(u_m, v_m, u_m)$ is a morphism in $\mod A_f$. The vanishing of 
$\delta_{m} \delta_{m+1}$ implies  $u_{m} u_{m+1}=0$, and  $v_{m} u_{m+1}+ u_{m} v_{m+1}=0$ follows from condition  (ii).
 Hence
$(u_{m}, v_{m}, u_{m}) (u_{m+1}, v_{m+1}, u_{m+1})=0$.  \\
We proceed to show that the complex is exact.
If $(x_0, y_0) \in \Ker \hat \delta_0$, then $x_0 = \delta_1(z)$ and  $\hat \delta_1 (y_0 - T_1 (z), z,0,0) =(x_0, y_0)$.
Let $(x_0, \dots, x_m, y_0, \dots, y_m) \in \Ker (u_m, v_m, u_m)$, that is, for all $i$ with $1\leq i\leq m$, we have
\begin{align*}
\delta_i(x_i)=0 \ \mbox{ and } \ 
(-1)^{i-1}(x_{i-1} + T_i(x_i))+ \delta_i(y_i)=0.
\end{align*}
Since $\delta_m(x_m)=0$, there exists $z_{m+1}$ such that $\delta_{m+1}(z_{m+1})= x_m$. Now we define recursively
$z_j = (-1)^j y_j - T_{j+1}(z_{j+1})$ for any $j$ with $0 \leq j \leq m$.  It is a simple matter to check that 
$(u_{m+1}, v_{m+1}, u_{m+1})(z_0, \dots, z_{m+1}, 0, \dots, 0)=(x_0, \dots, x_m, y_0, \dots, y_m)$. \\
 The only point remaining concerns minimality, that is, $\Im \hat \delta_m \subset \bigoplus_{i=0}^{m-1} \rad \hat Q_i$ for any $m >0$. From \eqref{exacta} it suffices to see
 that for any simple module $S = (0, S, 0, 0)$ we have that 
 $$(u_m, v_m, u_m)^*  \colon \Hom_{A_f} ( \bigoplus_{i=0}^{m-1} \hat Q_i , S) \to  \Hom_{A_f} ( \bigoplus_{i=0}^m \hat Q_i , S)$$
  is zero. Let $\hat g \in \Hom_{A_f} ( \bigoplus_{i=0}^{m-1} \hat Q_i , S)$. 
 By Proposition \ref{cero}, $\hat g = (0, g ,0)$ with $g=[g_0 \ \cdots \ g_{m-1}]$,  and $g_i \in \Hom_{A} (Q_i , S)$. 
Therefore,
\[ (u_m, v_m, u_m)^*(\hat g) = \hat g  \ (u_m, v_m, u_m) = (0, g   u_m, 0).  \]
By the minimality of the original resolution,  $\delta_i^* \colon \Hom_A (Q_{i-1}, S) \to \Hom_A(Q_i, S)$ is zero for any $i$. Thus, $g  u_m = [ 0 \ g_0 \delta_1 \ g_1 \delta_2 \ \cdots \ g_{m-1} \delta_{m}]=0$ and the Lemma follows.
 \end{proof}

We say that the $A$-module $M$ satisfies \emph{condition} $(\ast A)$ if it admits a minimal projective resolution 
\[ \cdots  \to Q_1 \stackrel{\delta_1}  {\longrightarrow}  Q_0  \stackrel{\delta_0}  {\longrightarrow} M \to 0 \]
such that $\delta_0 T_{\delta_1}=0$ for  $[T_{\delta_1}(x)] = \tilde{f}([x] \otimes B_1)E_0$,  where $B_i \in M_{n_i \times n_{i-1}}(A)$ is the matrix associated to each morphism $\delta_i : Q_i \to Q_{i-1}$ with $Q_i= \oplus_{r=1}^{n_i} A e_{i_r}$ and  $E_i \in M_{n_i}(A)$ is the diagonal matrix associated  with the projective module $Q_i$.

	\begin{lemma}
		If $M$ satisfies condition $(\ast A)$, then there exist matrices $C_i\in M_{n_i\times n_{i-1}}(A)$ such that $E_i C_i E_{i-1}=C_i$  for any $i \geq 1$ and $E_i \tilde{f}(B_i\otimes B_{i-1})E_{i-2}  = C_iB_{i-1}+B_iC_{i-1}$ for any $i \geq 2$.
\end{lemma}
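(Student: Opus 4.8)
The plan is to construct the matrices $C_i$ directly by induction on $i$, reading the asserted relation $E_i\tilde f(B_i\otimes B_{i-1})E_{i-2}=C_iB_{i-1}+B_iC_{i-1}$ as the statement that a certain ``$2$-cocycle'' built from $\tilde f$ becomes a ``coboundary'' once we are allowed to correct it by the $C_i$. Throughout I use the composability relation $B_iB_{i-1}=0$ coming from $\delta_{i-1}\delta_i=0$, together with the idempotent relations $E_iB_i=B_i=B_iE_{i-1}$ that follow from \eqref{matriz}. I abbreviate
\[ G_i:=E_i\,\tilde f(B_i\otimes B_{i-1})\,E_{i-2}\in M_{n_i\times n_{i-2}}(A), \]
so that $E_iG_iE_{i-2}=G_i$; by \eqref{matriz} each $G_i$ is right multiplication by the matrix of a genuine $A$-morphism $Q_i\to Q_{i-2}$, and likewise any matrix $C$ with $E_iCE_{i-1}=C$ is automatically an $A$-morphism. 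Hence the normalization $E_iC_iE_{i-1}=C_i$ will come for free as soon as each $C_i$ is produced as such a matrix, and the real goal is to find $A$-morphisms $C_i\colon Q_i\to Q_{i-1}$ with $G_i=C_iB_{i-1}+B_iC_{i-1}$.

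The first step is a compatibility identity for the family $G_i$. Applying the $2$-cocycle property of $\tilde f$ to the triple $B_{i+1},B_i,B_{i-1}$ and using $B_{i+1}B_i=0$ and $B_iB_{i-1}=0$ to kill the two middle terms, one gets $B_{i+1}\,\tilde f(B_i\otimes B_{i-1})=\tilde f(B_{i+1}\otimes B_i)\,B_{i-1}$. Sandwiching with the relevant idempotent matrices and using $E_{i+1}B_{i+1}=B_{i+1}$, $B_{i-1}E_{i-2}=B_{i-1}$ and $E_{i-1}B_{i-1}=B_{i-1}$, this upgrades to the cocycle identity
\[ B_{i+1}G_i=G_{i+1}B_{i-1}. \]
This is the only place where the cocycle condition on $\tilde f$ enters, and verifying that the idempotent factors match up correctly is the first delicate point.

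For the base of the induction I set $C_1:=0$, which is the choice already implicit in condition $(\ast)$, and I claim that $(\ast)$ produces $C_2$. Writing out $[\alpha_1\delta_2(x)]=\tilde f([x]B_2\otimes B_1)E_0$ and applying the cocycle property to $[x],B_2,B_1$ (again using $B_2B_1=0$) yields $[\alpha_1\delta_2(x)]=[x]G_2-\tilde f([x]\otimes B_2)B_1$. The second summand lies in the image of right multiplication by $B_1$, namely $\Im\delta_1=\Ker\delta_0$, so $\delta_0\alpha_1\delta_2=\delta_0 G_2$ as maps $Q_2\to M$; thus condition $(\ast)$ is exactly $\delta_0 G_2=0$, i.e.\ $\Im G_2\subseteq\Ker\delta_0=\Im\delta_1$. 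Since $Q_2$ is projective, I lift $G_2$ along the epimorphism $\delta_1\colon Q_1\twoheadrightarrow\Im\delta_1$ to an $A$-morphism $C_2\colon Q_2\to Q_1$ with $C_2B_1=G_2=G_2-B_2C_1$, which is the asserted relation at $i=2$.

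For the inductive step, assume $C_1,\dots,C_i$ have been built so that the relation holds up to level $i$. To obtain $C_{i+1}$ it suffices to lift $G_{i+1}-B_{i+1}C_i$ along $\delta_i$, so I must check that it lands in $\Ker\delta_{i-1}=\Im\delta_i$, that is, $(G_{i+1}-B_{i+1}C_i)B_{i-1}=0$. Substituting $C_iB_{i-1}=G_i-B_iC_{i-1}$ (the hypothesis at level $i$), then $B_{i+1}B_i=0$, and finally the cocycle identity $B_{i+1}G_i=G_{i+1}B_{i-1}$, this expression collapses to $0$; projectivity of $Q_{i+1}$ then supplies an $A$-morphism $C_{i+1}$ with $C_{i+1}B_i=G_{i+1}-B_{i+1}C_i$, which is the relation at level $i+1$, while $E_{i+1}C_{i+1}E_i=C_{i+1}$ holds automatically via \eqref{matriz}. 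The main obstacle is conceptual rather than computational: the family $G_i$ represents the Yoneda class in $\Ext^2_A(M,M)$ determined by $\tilde f$, and condition $(\ast)$ is precisely the vanishing $\delta_0G_2=0$ of a representative of that class, which is what powers the base case and, through the cocycle identity, propagates the lifting at every stage. The only thing to watch is the bookkeeping of signs: the $C_i$ above differ by the factors $(-1)^i$ from the maps that, substituted into the correction term of $\alpha_i$, make the $\alpha_i$ of Theorem~\ref{resolucion} satisfy condition (ii).
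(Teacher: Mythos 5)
Your proof is correct and follows essentially the same route as the paper: set $C_1=0$, use condition $(\ast)$ together with $\Im\delta_1=\Ker\delta_0$ and projectivity to produce $C_2$, and then induct using the identity $B_{i+1}\tilde f(B_i\otimes B_{i-1})=\tilde f(B_{i+1}\otimes B_i)B_{i-1}$ (your $B_{i+1}G_i=G_{i+1}B_{i-1}$), which is exactly the consequence of the $2$-cocycle condition and $B_{i+1}B_i=B_iB_{i-1}=0$ that the paper invokes. The only difference is cosmetic: you package the argument via the abbreviation $G_i$ and a lifting along $\delta_i$, where the paper writes out the same lift row by row.
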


\begin{proof}
Set $C_1=0$.  Now we proceed by induction. Using that $\tilde f$ satisfies the $2$-cocycle condition and $B_2 B_1 =0$, we get that
 \[ [T_{\delta_1}  \delta_2  (x)] = \tilde f([x] B_2 \otimes B_1)E_0=[x] \tilde f(B_2 \otimes B_1)E_0- \tilde f([x] \otimes B_2)B_1 E_0\]
for any  $x \in Q_2$. Since $\Im T_{\delta_1} \delta_2 \subset \Im T_{\delta_1} \subset \Ker \delta_0 = \Im \delta_1$ we can conclude that there exists $y_r \in Q_1$ such that 
\[  [e_{i_r}] \tilde f(B_2 \otimes B_1)E_0 = [y_r] B_1\]
for any $1 \leq r \leq n_2$. Then $E_2 \tilde{f}(B_2 \otimes B_1) E_0 =  C_2 B_1$ for $C_2=E_2 Y E_1$ where $Y \in M_{n_2\times n_{1}}(A)$ is the matrix whose rows are given by $[y_1], \dots, [y_{n_2}]$.

Let $i >1$ and 
assume that $E_i \tilde{f}(B_i\otimes B_{i-1})E_{i-2} =  C_iB_{i-1}+B_iC_{i-1}$. The vanishing of the composition $\delta_m  \delta_{m+1}$ implies that  $B_{m+1} B_{m} =0$  for all $m>0$.
Then
\[B_{i+1} \tilde{f}(B_i \otimes B_{i-1}) - \tilde{f}(B_{i+1} \otimes B_i) B_{i-1} =0\]
since  $\tilde f$ satisfies the $2$-cocycle condition. Therefore
\begin{align*} 
E_{i+1} \tilde{f}(B_{i+1} \otimes B_i) B_{i-1} & =  B_{i+1} \tilde{f}(B_{i} \otimes B_{i-1})E_{i-2}  = B_{i+1} C_i B_{i-1} .
\end{align*}
This implies that 
\[E_{i+1} \tilde{f}(B_{i+1} \otimes B_i)E_{i-1}  - B_{i+1} C_i \in \Ker \delta_{i-1} = \Im \delta_{i},\] so there exists $C_{i+1}$ such that 
$E_{i+1} \tilde{f}(B_{i+1} \otimes B_i)E_{i-1}  - B_{i+1} C_i  =   C_{i+1} B_i$ and the lemma follows.
\end{proof}

The following results are straightforward. 

\begin{proposition}\label{resolucion star} If $M$ satisfies condition $(\ast A)$
then the linear maps $T_{\delta_i} \colon Q_i \to Q_{i-1}$ defined by 
\begin{equation*} [T_{\delta_i} (x)] =(-1)^{i+1} (\tilde f([x] \otimes B_i )E_{i-1}  - [x] C_i) \label{T}\end{equation*}
with $B_i,C_i$ as before, satisfy conditions (i) and (ii) in Theorem \ref{resolucion}.

\end{proposition}

\begin{theorem}\label{3.4}
For any $A$-module satisfying condition $(\ast A)$, the $A_f$-module $(0,M,0,0)$ admits a minimal projective resolution as described in Theorem \ref{resolucion}. 
\end{theorem}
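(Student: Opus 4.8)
The plan is to assemble the three preceding results, since Theorem~\ref{3.4} is a synthesis of the machinery built in this section rather than a new computation. By hypothesis $M$ satisfies condition $(\ast)$, so it admits a minimal projective resolution $\cdots \to Q_1 \stackrel{\delta_1}{\to} Q_0 \stackrel{\delta_0}{\to} M \to 0$ in $\mod A$ for which $\delta_0 \alpha_1 \delta_2 = 0$, where $\alpha_1$ is given by $[\alpha_1(x)] = \tilde f([x] \otimes B_1) E_0$. The goal is then to produce maps $\alpha_i$ fulfilling conditions (i) and (ii) of Theorem~\ref{resolucion} and to quote that theorem.

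First I would feed condition $(\ast)$ into the Lemma to obtain matrices $C_i \in M_{n_i \times n_{i-1}}(A)$ satisfying $E_i C_i E_{i-1} = C_i$ and $E_i \tilde f(B_i \otimes B_{i-1}) E_{i-2} = C_i B_{i-1} + B_i C_{i-1}$. Using these, I define $\alpha_i \colon Q_i \to Q_{i-1}$ by
\[ [\alpha_i(x)] = (-1)^{i+1}\bigl( \tilde f([x] \otimes B_i) E_{i-1} - [x] C_i \bigr), \]
noting that $E_i C_i E_{i-1} = C_i$ is exactly what makes each $\alpha_i$ a well-defined map between the prescribed projectives. Since the hypothesis $\delta_0 \alpha_1 \delta_2 = 0$ required by Proposition~\ref{resolucion star} is precisely condition $(\ast)$, that proposition guarantees these $\alpha_i$ satisfy conditions (i) and (ii). With a minimal projective resolution of $M$ and such maps $\alpha_i$ in hand, I would then invoke Theorem~\ref{resolucion} directly to conclude that the associated complex $\bigoplus_{i=0}^m \hat Q_i$ is a minimal projective resolution of $(0,M,0,0)$ in $\mod A_f$.

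I do not expect a genuine obstacle, which is why the result is flagged as straightforward: the substantive content --- the inductive construction of the $C_i$ from the $2$-cocycle identity for $\tilde f$ and the relations $B_{i+1} B_i = 0$, and the verification of (i), (ii) and of minimality --- has already been discharged in the Lemma, Proposition~\ref{resolucion star}, and Theorem~\ref{resolucion}. The only point needing care is the matching of hypotheses: confirming that condition $(\ast)$ supplies exactly the input demanded by both the Lemma and Proposition~\ref{resolucion star}, and that the base case $C_1 = 0$ together with $\delta_0 \alpha_1 \delta_2 = 0$ is what allows the induction in the Lemma to start.
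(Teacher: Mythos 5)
Your proposal is correct and follows exactly the route the paper intends: the paper states this theorem (together with Proposition~\ref{resolucion star}) as a straightforward consequence, and your argument simply makes explicit the chain condition~$(\ast)$ $\Rightarrow$ existence of the $C_i$ (the Lemma) $\Rightarrow$ the $\alpha_i$ satisfy (i) and (ii) (Proposition~\ref{resolucion star}) $\Rightarrow$ Theorem~\ref{resolucion} applies. Your attention to the hypothesis matching (that $\delta_0\alpha_1\delta_2=0$ is precisely the input for Proposition~\ref{resolucion star} and that $C_1=0$ starts the induction) is exactly the content the paper leaves implicit.
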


\subsection{Projective resolutions in case (B)}

We say that the $A$-module $M$ satisfies \emph{condition}  $(\ast B)$  if it admits a minimal projective resolution 
\[ \cdots  \to Q_1 \stackrel{\delta_1}  {\longrightarrow}  Q_0  \stackrel{\delta_0}  {\longrightarrow} M \to 0 \]
such that, for any $i \geq 0$, $Q_i=  \oplus_{r=1}^{n_i}A e_{i_r}$, $B_{i+1} \in M_{n_{i+1} \times n_{i}}(A)$ the matrix associated to each 
morphism $\delta_{i +1}: Q_{i +1}\to Q_{i}$, $E_i \in M_{n_i}(A)$ the diagonal matrix associated to the projective module $Q_i$, $[T_{\delta_{i+1}}(x)] = \tilde{f}([x] \otimes B_{i+1})E_{i}$, then
\begin{enumerate}[label=(\roman*),ref=(\roman*)] 
\item  $\delta_{2i}  T_{\delta_{2i+1}}$ restricted to $\Ker \delta_{2i+1}$ is a monomorphism, and \label{itemone}
\item $Q_{2i} = T_{\delta_{2i+1}}(\Ker \delta_{2i+1}) \oplus \Im \delta_{2i+1}$ as $\K$-vector spaces. \label{itemtwo}
\end{enumerate}

\begin{theorem} \label{resolucion B}
Let $M$ be an $A$-module and let
\[ \cdots  \to Q_1 \stackrel{\delta_1}  {\longrightarrow}  Q_0  \stackrel{\delta_0}  {\longrightarrow} M \to 0 \]
be a minimal projective resolution of $M$ satisfying condition  $(\ast B)$. Then 
\[ \dots \to \hat Q_m \stackrel{\hat \delta_m}{\longrightarrow}  \hat Q_{m-1} \to \dots \to  \hat Q_1 \stackrel{\hat \delta_1}{\longrightarrow} \hat Q_0 \stackrel{\hat \delta_0}{\longrightarrow} M \to 0\]
is a minimal projective resolution of $M= (0,M,0,0)$, where  $\hat \delta_{2i} = (0, \delta_{2i}, 0)$ and
$\hat \delta_{2i+1} = (\delta_{2i+1}, T_{\delta_{2i+1}}, \delta_{2i+1})$ for  $i \geq 0$.
 \end{theorem}
 
  \begin{proof} It is clear that $\hat \delta_0$ is an epimorphism. A direct computation shows that $\hat \delta_{i} \hat \delta_{i+1}=0$ for any $i \geq 0$. 
   We proceed to show that the complex is exact.
  
If $(x, y) \in \Ker \hat \delta_{2i}$, then $x = \delta_{2i+1}(z_0)$. By condition \ref{itemtwo} we have that $$y - T_{\delta_{2i+1}}(z_0)= T_{\delta_{2i+1}}(z_1) +  \delta_{2i+1}(z_2)$$ for $z_1 \in \Ker \delta_{2i+1}$. Then $\hat \delta_{2i+1} (z_0+z_1,z_2) =(x, y)$. 

If $(x, y) \in \Ker \hat \delta_{2i+1}$, then $x \in \Ker \delta_{2i+1}$ and $T_{\delta_{2i+1}}(x) + \delta_{2i+1}(y)=0$. Then  $\delta_{2i} T_{\delta_{2i+1}}(x)=0$ and $x \in \Ker \delta_{2i+1}$. By condition \ref{itemone} we have that $x=0$. So  $y=\delta_{2i+2}(z)$ and $(0,y) =  \hat \delta_{2i+2}(z,0)$.

The minimality follows as in the proof of  Theorem \ref{resolucion}.
 \end{proof}

\subsection { }Taking into account conditions $(\ast A)$ and $(\ast B)$, we get the following theorem.

\begin{theorem} \label{ext}
Let $M,N$  be $A$-modules. If $M$ satisfies condition  $(\ast A)$ then
\[ \Ext_{A_f}^*( M, N) \simeq \Ext_{A}^*( M,  N) \otimes_\K \K [x] \]
and  if $M$ satisfies condition  $(\ast B)$ then
\[ \Ext_{A_f}^*( M, N) \simeq \Ext_{A}^*( M,  N) \]
as graded $\K$-vector spaces.
 \end{theorem}
 
 \begin{proof}
Observe that $\Ext_{A_f}^n( M, N)$ can be computed applying the functor $\Hom_{A_f}^n( - , N)$ to the minimal projective resolution constructed in Theorem \ref{resolucion},
 and the diagram 
\[ \xymatrix{  \Hom_{A_f} (\bigoplus_{k=0}^{m-1} \hat Q_k ,  N) \ar[r]^{\hat \delta_m^*} \ar[d]^{\simeq} & \Hom_{A_f} (\bigoplus_{k=0}^m \hat Q_k,  N) \ar[d]^{\simeq}\\
\bigoplus_{k=0}^{m-1} \Hom_{A}( Q_k,  N)   \ar[r]^{\bigoplus_{k=1}^{m}\delta^*_k}  & \bigoplus_{k=0}^m \Hom_{A}( Q_k,  N)
}
\] 
is commutative. Hence $\Ext_{A_f}^n( M, N) \simeq \bigoplus_{k=0}^n ( \Ext_{A}^k( M,  N) \otimes x^{n-k})$. Analogously, Case (B) follows from  Theorem \ref{resolucion B}.
 \end{proof}

\section{Liftings}

For any $A_f$-module $X$, the $\K$-module
$\Ext_{A_f}^{\ast}(X, X) = \bigoplus_{n \geq 0}\Ext_{A_f}^{n}(X,X)$ can be endowed with  structure of associative
$\K$-algebra using the Yoneda product
$$\circ \colon \Ext_{A_f}^{m}(Y,Z) \times \Ext_{A_f}^{n}(X,Y)\to \Ext_{A_f}^{n+m}(X,Z)$$
given by
$${h} \circ {g} :=  h  {\gamma}_m,$$
 where ${h} \in \Ext_{A_f}^{m}(Y,Z) $, ${g} \in \Ext_{A_f}^{n}(X,Y)$, and $\{ {\gamma}_m\}_{m \geq 0}$ is a lifting of the morphism $g$ defined between given projective resolutions of $X$ and $Y$. \\

Let $M, M'$ be two $A$-modules satisfying simultaneously condition $(\ast A)$,  or $(\ast B)$.  We want to describe an appropriate lifting for any  morphism $\hat g \in \Ext^n_{A_f} (M, M')$ in order to describe the Yoneda product.
For this, we fix
\[ \cdots  \to Q_1 \stackrel{\delta_1}  {\longrightarrow}  Q_0  \stackrel{\delta_0}  {\longrightarrow} M\to 0, \]
\[ \cdots  \to Q'_1 \stackrel{\delta'_1}  {\longrightarrow}  Q'_0  \stackrel{\delta'_0}  {\longrightarrow} M'\to 0 \]
minimal projective resolutions of the $A$-modules $M$ and $M'$.
Recall that $\{ \gamma_m\}_{m \geq 0}$ is a lifting of the morphism $g \in \Hom_A( Q_n , M')$  if 
   \[ \xymatrix{ 
\cdots \ar[r] 
&  Q_{n+m} \ar[r]^{\delta_{n+m}} \ar[d]^{\gamma_{m}} 
 &  Q_{n+m-1} \ar[r]  \ar[d]^{\gamma_{m-1}}  
& \dots  \ar[r] 
& Q_{n+1}  \ar[r]^{\delta_{n+1}} \ar[d]^{\gamma_1} 
& Q_{n} \ar[d]^{ \gamma_0} 
\ar[rd]^{g} 
&  \\
\cdots  \ar[r] 
&  Q'_{m}   \ar[r]^-{\delta'_{m}} 
 &  Q'_{m-1}  \ar[r]
& \dots   \ar[r] 
& Q'_1  \ar[r]^-{\delta'_1}  \ar[r] 
 & Q'_0  \ar[r]^-{\delta'_0}  
& M' \ar[r] 
& 0 }
  \] is a commutative diagram in $A$-$\mod$.

\subsection{Liftings in Case A}

Suppose that the minimal projective resolutions of the $A$-modules $M$ and $M'$
satisfy condition $(\ast A)$, and these projective resolutions are defined as in Proposition \ref{resolucion star}. 
A representative of $\hat g \in  \Ext_{A_f} ^n(M, M')$ is given by a morphism in $\Hom_{A_f}(\bigoplus_{r=0}^n \hat Q_r, M')$, that is,  
$\hat g =(0,g,0)$ with $g= [ g_0 \ \cdots \ g_n]$ for $g_r \in \Hom_A(Q_r, M')$. 
The Lifting Theorem yields the existence of morphisms $\hat \gamma_{m}=  (\gamma_m, \beta_m, \gamma_m)$ such that  
\[ {\tiny  \xymatrix{ 
\cdots \ar[r] &  \bigoplus_{r=0}^{n+m} \hat Q_{r}   \ar[r]^-{\hat \delta_{n+m}} \ar[d]^{\hat \gamma_{m}} & \bigoplus_{r=0}^{n+m-1} \hat Q_{r}    \ar[r] \ar[d]^{\hat \gamma_{m-1}} &
\dots  \ar[r] &  \bigoplus_{r=0}^{n+1} \hat Q_{r}  \ar[r]^{\hat \delta_{n+1}} \ar[d]^{\hat \gamma_{1}} &  \bigoplus_{r=0}^{n} \hat Q_{r} \ar[d]^{\hat \gamma_0} \ar[rd]^{\hat g} &  \\
\cdots  \ar[r] & \bigoplus_{r=0}^m \hat Q'_r   \ar[r]^{\hat \delta'_{m}} &  \bigoplus_{r=0}^{m-1} \hat Q'_r    \ar[r] &  \dots   \ar[r] & \bigoplus_{r=0}^1 \hat Q'_r  \ar[r]^{\hat \delta'_1}  \ar[r] & \hat Q'_0  \ar[r]^{\hat \delta'_0}  & \hat M' \ar[r] & 0  } }\]
is a commutative diagram where
\[ 
\gamma_m =  [ \gamma^{s,m}_t], 
\beta_m =  [ \beta^{s,m}_t] \colon  \bigoplus_{s=0}^{n+m} Q_{s} \to   \bigoplus_{t=0}^m Q'_t \]
with $\gamma^{s,m}_t \in \Hom_A( Q_{s}, Q'_t)$ and $\beta^{s,m}_t \in \Hom_\K( Q_{s}, Q'_t)$.  \\

A straightforward computation shows that these morphisms are characterized by the following properties:

\begin{lemma}\label{a--g}
The property of being morphisms is equivalent to 
\begin{itemize}
\item [a)] for $0 \leq t \leq m$ and $0 \leq s \leq n+m$:
\[  \beta_t^{s,m} (a x) - a  \beta_t^{s,m} (x)  = f_{Q_t}( a \otimes \gamma_t^{s,m}(x)) -  \gamma_t^{s,m}(f_{Q_s}(a \otimes x)), \quad \forall x \in Q_s.\]
\end{itemize}
The commutativity of the diagram is equivalent to conditions
\begin{itemize}
\item [b)] for $0  \leq s \leq n$:
\[ \delta'_0  \gamma^{s,0}_{0}  = g_s;\]
\item [c)] for $1 \leq t \leq m$:
\[ \delta'_t  \gamma_t^{0,m} =0; \]
\item [d)] for  $1 \leq t \leq m$ and $1 \leq s \leq n+m$:
\[ \delta'_t  \gamma^{s,m}_{t} = \gamma^{s-1,m-1}_{t-1}  \delta_{s};\]
\item [e)] for $1 \leq t \leq m$:
\[ (-1)^{t} \gamma_{t-1}^{0,m} + \gamma_{t-1}^{0,m-1}  = \delta'_t  \beta_t^{0,m}  + (-1)^{t-1} T_{\delta'_t }  \gamma_t^{0,m};\]
\item [f)] for $1 \leq t \leq m$ and $1 \leq s < n+m$:
\small{ \[ (-1)^{t} \gamma_{t-1}^{s,m} 
 + (-1)^s \gamma_{t-1}^{s,m-1}  = \delta'_t  \beta_t^{s,m}  - \beta_{t-1}^{s-1, m-1}  \delta_{s} + (-1)^{t-1} T_{\delta'_t}   \gamma_t^{s,m} + (-1)^{s} \gamma_{t-1}^{s-1, m-1}  T_{\delta_{s}}; \]}
\item [g)]  \label{condition} for $1 \leq t \leq m$:
\small{\[  (-1)^{t} \gamma_{t-1}^{n+m,m} =  \delta'_t  \beta_t^{n+m,m} 
- \beta_{t-1}^{n+m-1, m-1}  \delta_{n+m} + (-1)^{t-1} T_{\delta'_t}  \gamma_t^{n+m,m} + (-1)^{n+m} \gamma_{t-1}^{n+m-1, m-1} T_{ \delta_{n+m}}  .\]}
\end{itemize}
\end{lemma}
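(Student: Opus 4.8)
Both assertions are proved by unwinding the definition of $\C_f$ from Section 1 and reading the resulting identities off block by block; I would carry out each equivalence through reversible manipulations, so that both implications follow at once. For the first claim, the description of morphisms in $\C_f$ says that $(\gamma_m,\beta_m,\gamma_m)$ is an $A_f$-morphism $\bigoplus_{r=0}^{n+m}\hat Q_r\to\bigoplus_{r=0}^m\hat Q'_r$ exactly when the square relating the structure maps $T$ commutes and the twisted-linearity relation \eqref{eq:u_1} holds. For projective modules $\hat Q=(Q,Q,\Id,f_Q)$ the map $T$ is the identity, and since the outer entries of $\hat\gamma_m$ agree ($u_0=u_2=\gamma_m$), this square is automatic. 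Hence only \eqref{eq:u_1} survives; writing it out and using that the structure cocycle of $\bigoplus_{r=0}^{n+m}\hat Q_r$ acts through $f$ on each summand $Q_s$, the block from $Q_s$ to $Q'_t$ is precisely the identity in (a).

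For the commutativity, I would compute both composites in each square with the product rule $(v_0,v_1,v_2)(u_0,u_1,u_2)=(v_0u_0,\,v_2u_1+v_1u_0,\,v_2u_2)$, using $\hat\delta_{n+m}=(u_{n+m},v_{n+m},u_{n+m})$, the analogous $\hat\delta'_m=(u'_m,v'_m,u'_m)$, together with $\hat\gamma_m=(\gamma_m,\beta_m,\gamma_m)$ and $\hat g=(0,g,0)$. The base square $\hat\delta'_0\hat\gamma_0=\hat g$ has trivial outer entries and reduces to its middle entry $\delta'_0\gamma_0=g$, which is (b). For $m\ge1$ the relation $\hat\delta'_m\hat\gamma_m=\hat\gamma_{m-1}\hat\delta_{n+m}$ splits into its outer and middle entries. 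The outer entries give $u'_m\gamma_m=\gamma_{m-1}u_{n+m}$; since the only nonzero blocks of $u_{n+m}$ and $u'_m$ are $\delta_s\colon Q_s\to Q_{s-1}$ and $\delta'_t\colon Q'_t\to Q'_{t-1}$, the block from $Q_s$ to $Q'_{t-1}$ yields $\delta'_t\gamma_t^{s,m}=\gamma_{t-1}^{s-1,m-1}\delta_s$, which is (c) at the boundary $s=0$ (where $u_{n+m}$ has no block in that column) and (d) for $1\le s$.

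The middle entry of the same relation reads $u'_m\beta_m+v'_m\gamma_m=\gamma_{m-1}v_{n+m}+\beta_{m-1}u_{n+m}$. Expanding block by block, $v_{n+m}$ contributes a diagonal $(-1)^s\Id$ out of $Q_s$ together with an off-diagonal $(-1)^s\alpha_{s+1}$, and $v'_m$ contributes the corresponding terms built from the $\alpha'_t$ of the target resolution; collecting the block from $Q_s$ to $Q'_{t-1}$ and moving the $\gamma_{t-1}^{s,m}$ and $\gamma_{t-1}^{s,m-1}$ summands to the left-hand side produces (f) for interior indices $1\le s<n+m$. The two boundary columns split off the remaining conditions: at $s=0$ the blocks carrying $\delta_s$, $\alpha_s$ and the shifted entries $\gamma_{t-1}^{s-1,m-1}$, $\beta_{t-1}^{s-1,m-1}$ are absent, leaving (e); while at $s=n+m$ the diagonal block of $v_{n+m}$ disappears (the truncated target $\bigoplus_{s=0}^{n+m-1}Q_s$ has no summand $Q_{n+m}$), which removes the $\gamma_{t-1}^{s,m-1}$ term and leaves (g).

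The underlying computations are routine; the genuine difficulty is purely organizational. One must simultaneously keep track of three indices, namely the homological degree $s$ in the source, the degree $t$ in the target, and the truncation level $m$, as well as the signs $(-1)^s$, $(-1)^t$ and $(-1)^{n+m}$ coming from the matrices $v_m$, $v'_m$ and from the maps $\alpha_i$, $\alpha'_t$ of Theorem \ref{resolucion}. The delicate point is the treatment of the two boundary columns $s=0$ and $s=n+m$, where $u_{n+m}$ and $v_{n+m}$ are truncated and several blocks vanish: handling these edge cases correctly is exactly what separates (e), (f) and (g), and it is where a sign or an index slip would most easily occur.
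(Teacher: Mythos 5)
Your proposal is correct and is exactly the ``straightforward computation'' the paper invokes without writing out: unwinding the morphism condition \eqref{eq:u_1} (the $T$-square being automatic for projectives) to get (a), and reading the outer and middle entries of $\hat\delta'_m\hat\gamma_m=\hat\gamma_{m-1}\hat\delta_{n+m}$ block by block to get (b)--(g), with the boundary columns $s=0$ and $s=n+m$ accounting for the splitting into (c)/(d) and (e)/(f)/(g). Your handling of the truncation at $s=n+m$ (the missing diagonal block of $v_{n+m}$ killing the $\gamma_{t-1}^{s,m-1}$ term) and the signs is consistent with the stated identities.
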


\bigskip

Starting from the lifting $\{ \hat \gamma_m= (\gamma_m, \beta_m, \gamma_m)\}_{m \geq 0}$, we will construct a more convenient lifting $\{(\varphi_m, \phi_m, \varphi_m)\}_{m \geq 0}$ with  the following shape
$$ \varphi_m = \begin{pmatrix}
	\gamma_0^{0,0} & { \varphi_0^{1,m}} & { \varphi_0^{2,m}} &  \dots & { \varphi_0^{m-1,m}} & \gamma_0^{m,m} &    \dots &  \gamma_0^{n+m,m} \\
	0 & \gamma_1^{1,1} & { \varphi_1^{2,m}} & \dots &  {\varphi_1^{m-1,m} }&  \gamma_1^{m,m}    & \dots &  \gamma_1^{n+m,m} \\ 
	\vdots & &  & &  & \vdots &  &  \vdots \\
	0 &  & \dots &    & \gamma_{m-1}^{m-1,m-1} & \gamma_{m-1}^{m,m} &  \dots   &  \gamma_{m-1}^{n+m,m} \\
	0 &  & \dots &    & 0 & \gamma_{m}^{m,m} &  \dots   &  \gamma_{m}^{n+m,m} 
	\end{pmatrix}  \label{eq:def}
	$$
and
	$$ \phi_m= \begin{pmatrix}
	\beta_0^{0,0} & { \phi_0^{1,m}} & { \phi_0^{2,m}} &  \dots & { \phi_0^{m-1,m}} & \beta_0^{m,m} &    \dots &  \beta_0^{n+m,m} \\
	0 & \beta_1^{1,1} & { \phi_1^{2,m}} & \dots &  { \phi_1^{m-1,m} }&  \beta_1^{m,m}    & \dots &  \beta_1^{n+m,m} \\ 
	\vdots & &  & &  & \vdots &  &  \vdots \\
	0 &  & \dots &    & \beta_{m-1}^{m-1,m-1} & \beta_{m-1}^{m,m} &  \dots   &  \beta_{m-1}^{n+m,m} \\
	0 &  & \dots &    & 0 & \beta_{m}^{m,m} &  \dots   &  \beta_{m}^{n+m,m} 
	\end{pmatrix}.
	$$
		More precisely, we define
\begin{align}\label{eq:varphi}
	\varphi^{s,m}_t = 
	\begin{cases}
		0, & \mbox{if $s < t$}; \\
		\gamma^{s,m}_t, &
		\mbox{if $t \leq m \leq s  \leq n+m$}; \\
		\sum_{i=0}^{r} a^{m-s}_{r,i}\gamma_{t}^{s,s-i}, & \mbox{if $0 \leq s-t= r$, $0 \leq s \leq m$};
	\end{cases} \\
\label{eq:phi} \phi^{s,m}_t = 
\begin{cases}
0, & \mbox{if $s < t$}; \\
\beta^{s,m}_t, & \mbox{if $t \leq m \leq s  \leq n+m$}; \\
\sum_{i=0}^{r} a^{m-s}_{r,i}\beta_{t}^{s,s-i}, & \mbox{if $0 \leq  s-t= r$, $0 \leq s \leq m$};
\end{cases} 
\end{align}	
where $a^{m-s}_{r,i}$ is defined inductively by the formulae
\[ \begin{cases} 
a^0_{r,0}  = 1, & \mbox{for any $r \geq 0$}, \\
a^0_{r,i}  = 0, & \mbox{for any $0< i \leq r $}, \\
a^{k}_{r,i}  = (-1)^r a^{k-1}_{r,i} + a^{k}_{r-1,i}+ (-1)^{r+1} a^{k}_{r-1,i-1}, & \mbox{for any $0 \leq i \leq r, k >0$},
\end{cases} \]
with the convention that $a^{k}_{r,i} =0$ if $r<0$,  $i<0$ or $i>r$. 
In particular, one gets that $a^{m-s}_{0,0}= a^{m-s-1}_{0,0}$, and hence
$a^{m-s}_{0,0}= a^{0}_{0,0}=1$.

\begin{proposition} \label{Proposition 6.1} For any $\hat g  \in \Ext^n_{A_f} (M, M')$, set $\hat \gamma_m$ as before. Then the
	 morphisms $\hat \varphi_{m}=  (\varphi_m, \phi_m, \varphi_m)$ complete the following diagram
	\[ {\tiny  \xymatrix{ 
			\cdots \ar[r] &  \bigoplus_{r=0}^{n+m} \hat Q_{r}   \ar[r]^-{\hat \delta_{n+m}} \ar[d]^{\hat \varphi_{m}} & \bigoplus_{r=0}^{n+m-1} \hat Q_{r}    \ar[r] \ar[d]^{\hat \varphi_{m-1}} &
			\dots  \ar[r] &  \bigoplus_{r=0}^{n+1} \hat Q_{r}  \ar[r]^{\hat \delta_{n+1}} \ar[d]^{\hat \varphi_{1}} &  \bigoplus_{r=0}^{n} \hat Q_{r} \ar[d]^{\hat \varphi_0} \ar[rd]^{\hat g} &  \\
			\cdots  \ar[r] & \bigoplus_{r=0}^m \hat P'_r   \ar[r]^{\hat \delta'_{m}} &  \bigoplus_{r=0}^{m-1} \hat P'_r    \ar[r] &  \dots   \ar[r] & \bigoplus_{r=0}^1 \hat P'_r  \ar[r]^{\hat \delta'_1}  \ar[r] & \hat P'_0  \ar[r]^{\hat \delta'_0}  & \hat M' \ar[r] & 0  } }\]
	in a commutative way.
\end{proposition}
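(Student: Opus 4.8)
The plan is to use Lemma~\ref{a--g} as a checklist: the triple $\hat\varphi_m=(\varphi_m,\phi_m,\varphi_m)$ completes the diagram commutatively precisely when its entries satisfy condition (a) (so that each $\hat\varphi_m$ is a morphism in $\mod A_f$) together with conditions (b)--(g) (so that every square, and the final triangle, commutes). Since the horizontal maps $\hat\delta_\bullet$ and $\hat\delta'_\bullet$ are exactly those used for $\hat\gamma_\bullet$, these are the very equations of Lemma~\ref{a--g}, now read with $\varphi^{s,m}_t,\phi^{s,m}_t$ in place of $\gamma^{s,m}_t,\beta^{s,m}_t$. The two inputs I would feed in are that the original lifting $\hat\gamma_m$ already satisfies (a)--(g), and the defining recursion for the scalars $a^{m-s}_{r,i}$. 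Throughout I would split into the two regimes of \eqref{eq:varphi}--\eqref{eq:phi}, namely $s\geq m$ (where the entries are literally $\gamma^{s,m}_t,\beta^{s,m}_t$) and $s\leq m$ (the genuine linear combinations), checking that the two prescriptions agree on the overlap $s=m$ via $a^0_{r,0}=1$ and $a^0_{r,i}=0$ for $i>0$.

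The routine part comes first. Condition (a) is $\K$-linear in the pair $(\gamma,\beta)$, and $(\varphi^{s,m}_t,\phi^{s,m}_t)$ is one and the same linear combination $\sum_i a^{m-s}_{r,i}(\gamma^{s,s-i}_t,\beta^{s,s-i}_t)$ of pairs that each satisfy (a); hence so does the combination, and each $\hat\varphi_m$ is a morphism. Condition (b) is immediate because $\varphi^{s,0}_0=\gamma^{s,0}_0$, and condition (c) holds trivially since the triangular shape forces $\varphi^{0,m}_t=0$ for $t\geq 1$. Condition (d), the $A$-linear commutativity of the squares, I would obtain by applying the known (d) for $\gamma$ entrywise: on $s\geq m$ the entries coincide with $\gamma$, while for $s<m$ one observes that $\varphi^{s-1,m-1}_{t-1}$ carries exactly the same coefficients $a^{m-s}_{r,i}$ as $\varphi^{s,m}_t$, so the relation $\delta'_t\varphi^{s,m}_t=\varphi^{s-1,m-1}_{t-1}\delta_s$ reduces term by term to $\delta'_t\gamma^{s,s-i}_t=\gamma^{s-1,s-1-i}_{t-1}\delta_s$.

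The heart of the argument, and the step I expect to be the main obstacle, is conditions (e), (f), (g), which come from the middle ($\phi$) component and mix the identity block and the $\alpha'$ block of $v'_m$. Taking the interior condition (f) as the model, I would substitute the definitions of $\varphi$ and $\phi$ into both sides and rewrite each $\delta'_t\beta^{s,s-i}_t$ using (f) for the original lifting. Two families of terms then cancel in pairs: the $\alpha'_t\gamma^{s,s-i}_t$ produced by $\delta'_t\phi^{s,m}_t$ against the explicit $(-1)^{t-1}\alpha'_t\varphi^{s,m}_t$, and likewise the $\gamma^{s-1,s-1-i}_{t-1}\alpha_s$ and the $\beta^{s-1,s-1-i}_{t-1}\delta_s$ terms against $(-1)^s\varphi^{s-1,m-1}_{t-1}\alpha_s$ and $\phi^{s-1,m-1}_{t-1}\delta_s$. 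What survives on each side is a combination of the $\gamma^{s,s-j}_{t-1}$, and matching their coefficients reduces condition (f) to the single scalar identity
\[ a^{k}_{r+1,j}+(-1)^{r}a^{k-1}_{r+1,j}=a^{k}_{r,j}+(-1)^{r}a^{k}_{r,j-1},\qquad k=m-s,\ r=s-t, \]
which is exactly the defining recursion $a^{k}_{r,i}=(-1)^r a^{k-1}_{r,i}+a^{k}_{r-1,i}+(-1)^{r+1}a^{k}_{r-1,i-1}$ read with $r+1$ in place of $r$. The conceptual content is that the three recursion terms account precisely for the three index shifts occurring in (f): the superscript shift $m\mapsto m-1$ and the two shifts $s\mapsto s-1$, $t\mapsto t-1$ introduced by $\delta_s$ and $\alpha_s$.

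Finally, the boundary conditions (e) and (g), for $s=0$ and $s=n+m$, I would handle by the identical computation: some of the cancelling terms are simply absent there, and the surviving identity is again the same recursion, now evaluated under the conventions $a^{k}_{r,i}=0$ for $r<0$, $i<0$, or $i>r$. The genuine difficulty is therefore not conceptual but bookkeeping: tracking all the signs $(-1)^t,(-1)^s,(-1)^{t-1}$ through the substitution and confirming that every term generated is absorbed by exactly one recursion term, uniformly across the interior and both boundaries. Once (a)--(g) are verified, Lemma~\ref{a--g} yields the commutativity of the diagram and the proposition follows.
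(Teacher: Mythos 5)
Your proposal is correct and follows essentially the same route as the paper: verify conditions (a)--(g) of Lemma~\ref{a--g} for $(\varphi_m,\phi_m,\varphi_m)$, using linearity for (a), the definitions for (b)--(d), and for (e)--(g) expanding the combination $\sum_i a^{m-s}_{r,i}(\gamma^{s,s-i}_t,\beta^{s,s-i}_t)$, applying the known equations for $\hat\gamma$ at each level, and matching coefficients via the recursion for $a^{k}_{r,i}$ shifted to $r+1$ --- which is precisely the identity the paper isolates. The only cosmetic difference is that the paper packages the cancellation you describe into the single auxiliary expression $\Phi^{s,m}_t$ and notes that (g) needs no new computation since at $s=n+m$ the entries are literally the $\gamma$'s.
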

\begin{proof}  	We need to prove that $\varphi^{s,m}_t$ and $\phi^{s,m}_t$ satisfy all the equations in Lemma \ref{a--g}. Equation (a) follows immediately by definition. Since $\varphi^{s,0}_0= \gamma^{s,0}_0$, equation (b) is satisfied. For any $t \geq 1$, $\varphi^{0,m}_t=0$, thus (c) follows.  Equation (d) is satisfied by
$\varphi^{s,m}_{t}$ and $\varphi^{s-1,m-1}_{t-1}$ since $s-t=(s-1)-(t-1)$ and $m-s= (m-1)-(s-1)$, thus they belong to the same case in \eqref{eq:varphi}.

 For the last three equations, to simplify notation set 
\[ \Phi^{s,m}_t = \delta'_t  \phi_t^{s,m}  - \phi_{t-1}^{s-1, m-1}  \delta_{s} + (-1)^{t-1} T_{\delta'_t }  \varphi_t^{s,m} + (-1)^{s} \varphi_{t-1}^{s-1, m-1}  T_{\delta_{s}}.
\]
Since all the terms in this expression belong to the same case in \eqref{eq:varphi} and \eqref{eq:phi}, we have that
\begin{equation} \label{induccion}
 \Phi^{s,m}_t = 
\begin{cases}
0, & \mbox{if $s < t$}; \\
\sum_{i=0}^{r} a^{m-s}_{r,i}\Phi_{t}^{s,s-i}, & \mbox{if $0 \leq s-t= r$, $0 \leq s \leq m$};\\
(-1)^{t} \gamma_{t-1}^{s,m}  + (-1)^s \gamma_{t-1}^{s,m-1}, & \mbox{if $0 \leq s-t$, $m \leq s \leq m+n$}. 
\end{cases} 
\end{equation}
With this notation, equation (e) reads as follows
\[ (-1)^{t} \varphi_{t-1}^{0,m} + \varphi_{t-1}^{0,m-1}  = \Phi_t^{0,m},\]
and it is clearly satisfied for $t>1$ since all the terms vanish, and also for $t=1$ since $\varphi_{0}^{0,m} = \varphi_{0}^{0,m-1}=\varphi_{0}^{0,0}$.
Analogously, equation (f) 
 \[ (-1)^{t} \varphi_{t-1}^{s,m} 
 + (-1)^s \varphi_{t-1}^{s,m-1}  = \Phi_t^{s,m} \]
 is satisfied when $s < t-1 < t$, and also for $s=t-1$ since $\varphi_{s}^{s,m} = \varphi_{s}^{s,m-1}=\varphi_{s}^{s,s}$. For $s-t \geq 0$ and $m \leq s < m+n$ we recover the formula for $\hat \gamma_m$.

Now we prove the equality for $r= s-t \geq 0$ and $1 \leq s < m$.   Let $m -s >0$ and $r \geq 0$. 
By  \eqref{induccion} and the definition of $\Phi^{s, s-i}_{s-r}$ we have that 
 \begin{align*}
    \Phi_{s-r}^{s,m}  & = \sum_{i=0}^r a^{m-s}_{r,i} \Phi^{s, s-i}_{s-r} 
     = \sum_{i=0}^r a^{m-s}_{r,i} ( (-1)^{s-r}    \gamma^{s, s-i}_{s-r-1} + (-1)^s   \gamma^{s, s-i-1}_{s-r-1}) \\
    & = (-1)^{s-r}  \sum_{i=0}^r a^{m-s}_{r,i}    \gamma^{s, s-i}_{s-r-1} +  (-1)^s  \sum_{i=1}^{r+1} a^{m-s}_{r,i-1}     \gamma^{s, s-i}_{s-r-1}, \\    
     & = \sum_{i=0}^{r+1} ( (-1)^{s-r}  a^{m-s}_{r,i}   +  (-1)^s  a^{m-s}_{r,i-1} )    \gamma^{s, s-i}_{s-r-1}. 
   \end{align*}
  Since	$$a^{m-s}_{r+1,i}  = (-1)^{r+1} a^{m-s-1}_{r+1,i} + a^{m-s}_{r,i}+ (-1)^{r} a^{m-s}_{r,i-1},$$
  by \eqref{eq:varphi}   
  equation (f) holds.
 Finally, (g) coincides with the equation for $\hat \gamma_m$.
 \end{proof}

\bigskip

\begin{remark} \label{levantados}
Equation (d) implies that  the morphisms $\varphi_t^{s,m}$ correspond to a lifting in degree $t$ of $\delta'_0   \varphi^{s-t,m-t}_0$ for  $1 \leq t \leq m$ and $t \leq s \leq n+m$. Indeed,
   \[ \xymatrix{ 
\cdots \ar[r] 
&  Q_{s} \ar[r]^{\delta_{s}} \ar[d]_{\varphi^{s, m}_{t}} 
 &  Q_{s-1} \ar[r]  \ar[d]_{\varphi^{s-1, m-1}_{t-1}}  
& \dots  \ar[r] 
& Q_{s-t+1}  \ar[r]^{\delta_{s-t+1}} \ar[d]_{\varphi^{s-t+1,m-t+1}_1} 
& Q_{s-t} \ar[d]_{ \varphi^{s-t,m-t}_0} 
\ar[rd]^{\delta_0'   \varphi^{s-t,m-t}_0 } 
&  \\
\cdots  \ar[r] 
&  Q'_{t}   \ar[r]_-{\delta'_{t}} 
 &  Q'_{t-1}  \ar[r]
& \dots   \ar[r] 
& Q'_1  \ar[r]_-{\delta'_1}  \ar[r] 
 & Q'_0  \ar[r]_-{\delta'_0}  
& M' \ar[r] 
& 0 }
  \] is a commutative diagram in $A$-$\mod$. In particular, if $t=m$, using (b) we get that $\varphi_m^{s,m} = \gamma_m^{s,m}$ corresponds to a lifting in degree $m$ of $\delta'_0   \gamma^{s-m,0}_0=g_{s-m}$.

\end{remark} 

 \subsection{Liftings in Case B} 

Suppose that the minimal projective resolutions of the $A$-modules $M$ and $M'$
satisfy condition  $(\ast B)$, and these projective resolutions are defined as in Proposition \ref{resolucion B}. 
A representative of $\hat g \in  \Ext_{A_f} ^n(M, M')$ is given by a morphism in $\Hom_{A_f}( \hat Q_n, M')$, that is,  
$\hat g =(0,g,0)$ with $g \in \Hom_A(Q_n, M')$.  \\

The Lifting Theorem yields the existence of morphisms $\hat \gamma_{m}=  (\gamma_m, \beta_m, \gamma_m)$ such that  
\[ {\tiny  \xymatrix{ 
\cdots \ar[r] &  \hat Q_{n+m}   \ar[r]^-{\hat \delta_{n+m}} \ar[d]^{\hat \gamma_{m}} &  \hat Q_{n+m-1}    \ar[r] \ar[d]^{\hat \gamma_{m-1}} &
\dots  \ar[r] &   \hat Q_{n+1}  \ar[r]^{\hat \delta_{n+1}} \ar[d]^{\hat \gamma_{1}} &  \hat Q_{n} \ar[d]^{\hat \gamma_0} \ar[rd]^{\hat g} &  \\
\cdots  \ar[r] &  \hat Q'_m   \ar[r]^{\hat \delta'_{m}} &   \hat Q'_{m-1}    \ar[r] &  \dots   \ar[r] &  \hat Q'_1  \ar[r]^{\hat \delta'_1}  \ar[r] & \hat Q'_0  \ar[r]^{\hat \delta'_0}  & \hat M' \ar[r] & 0  } }\]
is a commutative diagram 
with $\gamma_m \in \Hom_A( Q_{n+m}, Q'_m)$ and $\beta_m \in \Hom_\K( Q_{n+m}, Q'_m)$.

\begin{lemma}\label{b--g-1} 
If $n$ is even and $\{ \hat \gamma_{m}= (\gamma_m, \beta_m, \gamma_m)\}_{m \geq 0}$ is a lifting for $\hat g$ then $\{\gamma_m\}_{m \geq 0}$ is a lifting for $g$.
\end{lemma}

\begin{proof}
If $n$ is even then $m$ and $n+m$ have the same parity. So, $\hat g=\hat  \delta'_0 \hat \gamma_0$ implies that $g= \delta'_0 \gamma_0$ and, for any $m \geq 1$,  $\hat \gamma_{m-1} \hat \delta_{n+m} = \hat \delta'_m \hat \gamma_m$ implies that 
\begin{align*}
(\gamma_{m-1}, \beta_{m-1}, \gamma_{m-1}) (\delta_{n+m}, T_{\delta_{n+m}}, \delta_{n+m}) & = (\delta'_{m}, T_{\delta'_{m}}, \delta'_{m})  (\gamma_m, \beta_m, \gamma_m) & \mbox{if $m$ is odd,} \\
(\gamma_{m-1}, \beta_{m-1}, \gamma_{m-1}) (0, \delta_{n+m},0) & = (0, \delta'_{m}, 0)  (\gamma_m, \beta_m, \gamma_m) & \mbox{if $m$ is even.} 
\end{align*}
In both cases we get that $\gamma_{m-1} \delta_{n+m}= \delta'_m \gamma_m$ for any $m \geq 1$.
 \end{proof}
 
 \medskip

\begin{lemma}\label{b--g-2} 
If $n$ is odd and $\{\gamma_m\}_{m \geq 0}$ is a lifting for $g$ then there exists a lifting  $\{\hat \gamma_m\}_{m \geq 0}$ for $\hat g$ defined by  
\[ \hat \gamma_{m} = 
\begin{cases}
(\gamma_m, T_{\gamma_m}, \gamma_m) & \mbox{if $m$ is even} \\
(0, \gamma_m, 0) & \mbox{if $m$ is odd} 
\end{cases}\]
where  $[T_{\gamma_m}(x)] = \tilde{f}([x] \otimes D_{m+n})E_{m}$, 
 with $D_{m+n}\in M_{n_{m+n} \times n_{m}}(A)$ the matrix associated to the morphism $\gamma_m : Q_{m+n} \to Q_{m}$ with $Q_i= \oplus_{r=1}^{n_i} A e_{i_r}$ and  $E_{m} \in M_{n_{m}}(A)$ the diagonal matrix associated to the projective module $Q_{m}$.
\end{lemma}

\begin{proof}
If $n$ is odd, then $\hat  \delta'_0 \hat \gamma_0 = (0, \delta'_0, 0)(\gamma_0, T_{\gamma_0}, \gamma_0)  = (0,  \delta'_0 \gamma_0, 0) = \hat g$. If $m$ is odd, then
\begin{align*}
\hat \gamma_{m-1} \hat \delta_{n+m} &  = (\gamma_{m-1}, T_{\gamma_{m-1}}, \gamma_{m-1}) (0, \delta_{n+m}, 0) \\
& = (0, \gamma_{m-1} \delta_{n+m}, 0) = (0,  \delta'_m \gamma_m, 0) \\
& = (\delta'_{m}, T_{\delta'_{m}}, \delta'_{m}) (0, \gamma_m, 0) =  \hat \delta'_m \hat \gamma_m.
\end{align*}
Finally, if $m$ is even, then
\begin{align*}
\hat \gamma_{m-1} \hat \delta_{n+m} &  = (0, \gamma_{m-1}, 0) ( \delta_{n+m},T_{\delta_{n+m}}, \delta_{n+m}) \\
& = (0, \gamma_{m-1} \delta_{n+m}, 0) = (0,  \delta'_m \gamma_m, 0) \\
& = (0, \delta'_{m}, 0) (\gamma_m, T_{\gamma_m} , \gamma_m) =  \hat \delta'_m \hat \gamma_m
\end{align*}
and the lemma follows.
\end{proof}

\section{Ext-algebras}

In the remainder of the paper we assume that all the simple $A$-modules satisfy condition $(\ast A)$ or  $(\ast B)$.

Set $S$ the direct sum of all the simple $A$- modules. We have seen in Theorem \ref{ext} that $\Ext^*_{A_f}(S, S) \simeq \Ext^*_A(S, S) \otimes_\K \K[x]$ in case (A) and 
$\Ext^*_{A_f}(S, S) \simeq \Ext^*_A(S, S)$ in case (B).
More precisely,  since  $S$ is semisimple, if we apply the functor 
$\Hom_{A_f}(-, S)$ to a minimal projective resolution for $S$ as described in Theorem \ref{3.4} and  \ref{resolucion B},  the corresponding morphisms vanish, and hence
	$$\Ext^n_{A_f}(S, S) =  \Hom_{A_f} (\bigoplus_{k=0}^n \hat Q_k, S) \simeq   \bigoplus_{k=0}^n \Hom_A(Q_k, S) = \bigoplus_{k=0}^n \Ext^k_A(S, S)$$
	in case (A), and 
	$$\Ext^n_{A_f}(S, S) =  \Hom_{A_f} ( \hat Q_n, S) \simeq  \Hom_A(Q_n, S) =  \Ext^n_A(S, S)$$
	in case (B).
	
	Now we will see that, using this isomorphism, the Yoneda product
	$$\circ \colon \Ext^m_{A_f}(S, S)  \otimes \Ext^n_{A_f}(S, S) \to  \Ext^{n+m}_{A_f}(S, S)$$ 
	can be described in terms of the Yoneda products
	$$\star \colon \Ext^t_{A}(S, S)  \otimes   \Ext^s_{A}( S, S) \to  \Ext^{s+t}_{A}(S,S)$$ 
	for all $s,t$ with $0 \leq s \leq n, 0\leq t \leq m$. 
	
 \subsection{The product in case (A)}

 In this case the  $A$-module $S$ 
satisfies condition $(\ast A)$ and its projective resolution is defined as in Proposition \ref{resolucion star}. \\

This case can be applied for any algebra $A$ with a Hochschild $2$-cocycle $f$ satisfying $\Im f \subset \rad A$ since, in this case, $\Im T_{\delta_1} \subset \rad Q_0$ and hence $\delta_0 T_{\delta_1}=0$,  see for instance Examples \ref{example 1} and \ref{example 2}.

\begin{lemma} \label{lifting} If  $S$ satisfies $(\ast A)$,
 let $\hat g \in  \Ext^n_{A_f}(S, S)$ and $\varphi_0^{s,m}$ as defined in \eqref{eq:varphi}. 
Let $0 \leq t \leq m$ and $0 \leq s \leq n+m$. Then, for $s<m+n$, the morphism 
$\varphi_0^{s,m}$ corresponds to a lifting in  degree $0$ of
\[\begin{cases}
 \sum_{k=0}^{r} \binom{r}{k} (-1)^{k}g_{s-2k} T_{\delta_{s-2k+1} }\cdots T_{\delta_{s}}, & \mbox{ if } m = 2r, 
 \\
 \sum_{k=0}^{r}\  \binom{r}{k} (-1)^{k+s}	( g_{s-2k} T_{\delta_{s-2k +1}} \cdots T_{\delta_{s}}- g_{s-2k-1} T_{\delta_{s-2k}} \cdots T_{\delta_{s}}), & \mbox{ if } m = 2r+1, 
\end{cases}\]
with the convention that $g_k=0$ if $k<0$ or $k>n$; and, for $s=m+n$, the morphism  $\varphi_0^{s,m}$ corresponds to a lifting in  degree $0$ of
$$(-1)^{m(n+1) + \frac{m(m+1)}{2}}g_n T_{\delta_{n+1}}T_{\delta_{n+2} }\cdots T_{\delta_{n+m}}.$$ 
\end{lemma}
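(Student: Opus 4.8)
The plan is to prove both assertions at once by induction on $m$, showing that the $A$-linear map $\delta'_0\varphi_0^{s,m}\colon Q_s\to S$ equals the displayed formula; since $\varphi_0^{s,m}$ is by definition a lift of $\delta'_0\varphi_0^{s,m}$ through the epimorphism $\delta'_0$, this is exactly what the statement asks (compare Remark \ref{levantados} with $t=0$). Write $\Lambda^{s,m}\colon Q_s\to S$ for the claimed target map, with the convention $\Lambda^{s,m}=0$ whenever $s<0$ or $s>n+m$, and set $P^s_j:=g_{s-j}\alpha_{s-j+1}\cdots\alpha_s$, so that $P^s_j=P^{s-1}_{j-1}\alpha_s$. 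The base case $m=0$ is precisely equation (b) of Lemma \ref{a--g}: $\delta'_0\varphi_0^{s,0}=\delta'_0\gamma_0^{s,0}=g_s=\Lambda^{s,0}$.

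The first step is to extract a two-term recursion for $h^{s,m}:=\delta'_0\varphi_0^{s,m}$. By Proposition \ref{Proposition 6.1} the pair $(\varphi_m,\phi_m)$ satisfies all the equations of Lemma \ref{a--g}, so I would take equations (e), (f), (g) with $t=1$ and compose them on the left with $\delta'_0$. Because $\delta'_0\delta'_1=0$, the term $\delta'_1\phi_1^{s,m}$ drops out, and one is left with
\[ h^{s,m}=(-1)^s\big(h^{s,m-1}-h^{s-1,m-1}\alpha_s\big)+E^{s,m}, \qquad E^{s,m}:=\delta'_0\phi_0^{s-1,m-1}\delta_s-\delta'_0\alpha'_1\varphi_1^{s,m}, \]
uniformly for $0\le s\le n+m$ once we agree that $h^{s-1,\cdot}=0$ for $s=0$ and $h^{s,m-1}=0$ for $s>n+m-1$ (the latter convention encodes the absence of the $\Lambda^{s,m-1}$-term in the top case $s=n+m$, governed by (g)). Note that for $s=0$ one has $\varphi_1^{0,m}=0$ by \eqref{eq:varphi}, so $E^{0,m}=0$ outright.

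Next I would verify that the target maps satisfy the very same recursion, i.e. $\Lambda^{s,m}=(-1)^s(\Lambda^{s,m-1}-\Lambda^{s-1,m-1}\alpha_s)$, which under the convention above also yields the one-term identity $\Lambda^{n+m,m}=(-1)^{n+m+1}\Lambda^{n+m-1,m-1}\alpha_{n+m}$. This is a direct manipulation: writing $\Lambda^{s,m}$ in terms of the $P^s_j$, splitting the binomial coefficients by Pascal's rule $\binom{r}{k}=\binom{r-1}{k}+\binom{r-1}{k-1}$, and matching the two parities of $m$ collapses the right-hand side to the left-hand side. The same bookkeeping controls the top exponent, whose increment
\[ \Big(m(n+1)+\tfrac{m(m+1)}{2}\Big)-\Big((m-1)(n+1)+\tfrac{(m-1)m}{2}\Big)=n+m+1 \]
is exactly the parity of the sign $(-1)^{n+m+1}$ produced by the recursion. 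Granting $E^{s,m}=0$, the two recursions and the common base case complete the induction; the combinatorial identities for the coefficients $a^{m-s}_{r,i}$ needed to pass between \eqref{eq:varphi} and this formula are of the same routine nature.

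The main obstacle is the vanishing $E^{s,m}=0$, and this is exactly where condition $(\ast)$ must be used. Equation (d) gives $\delta'_1\varphi_1^{s,m}=\varphi_0^{s-1,m-1}\delta_s$, so $\varphi_1^{s,m}$ does \emph{not} land in $\Ker\delta'_1=\Im\delta'_2$ and one cannot invoke $\delta'_0\alpha'_1\delta'_2=0$ directly. Instead I would study the $\K$-linear map $\psi:=\alpha'_1\varphi_1^{s,m}-\phi_0^{s-1,m-1}\delta_s$ via its failure of $A$-linearity: combining the defining relation (i) of Theorem \ref{resolucion} for $\alpha'_1$ with relation (a) of Lemma \ref{a--g} for $\phi_0^{s-1,m-1}$, the two $f_{Q'_0}$-contributions cancel and leave a defect expressed purely through $\delta'_1$ and through $f$ evaluated on radical elements (recall $\delta_s$ lands in $\rad Q_{s-1}$). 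Condition $(\ast)$ — equivalently, the relations $E_i\tilde f(B_i\otimes B_{i-1})E_{i-2}=C_iB_{i-1}+B_iC_{i-1}$ furnished by the Lemma preceding Proposition \ref{resolucion star} — is precisely the hypothesis forcing these $f$-values into $\rad$, so that $\delta'_0\psi$ vanishes after projecting onto the semisimple $S$; that $(\ast)$ is genuinely needed is already visible for $A=\K[x]/(x^2)$ with $f(x\otimes x)=1$, where $\delta_0\alpha_1\delta_2\neq 0$. I expect this step to demand the most care, since it must be carried out compatibly with the inductive hypothesis $h^{s-1,m-1}=\Lambda^{s-1,m-1}$ in order to kill the residual terms; once it is in place, the equalities (b)--(d) and the sign/coefficient identities assemble routinely into the two stated formulas.
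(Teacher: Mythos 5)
Your overall strategy is the paper's: induct on $m$, specialize equations (f) and (g) of Lemma \ref{a--g} (as satisfied by $(\varphi_m,\phi_m,\varphi_m)$ via Proposition \ref{Proposition 6.1}) to $t=1$, compose on the left with $\delta'_0$, and obtain the two-term recursion $\delta'_0\varphi_0^{s,m}=(-1)^s\bigl(\delta'_0\varphi_0^{s,m-1}-\delta'_0\varphi_0^{s-1,m-1}\alpha_s\bigr)$, with the first summand absent when $s=n+m$; Pascal's rule and your increment count $n+m+1$ for the exponent in the top case are exactly the bookkeeping the paper carries out. The base cases ($m=0$ via (b), and $s=0$ via $\varphi_0^{0,m}=\gamma_0^{0,0}$) also match.

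The genuine gap is the step you yourself label the ``main obstacle'': the vanishing of $E^{s,m}=\delta'_0\phi_0^{s-1,m-1}\delta_s-\delta'_0\alpha'_1\varphi_1^{s,m}$. You do not prove it; you sketch a plan (analyzing the failure of $A$-linearity of $\alpha'_1\varphi_1^{s,m}-\phi_0^{s-1,m-1}\delta_s$ and invoking the matrices $C_i$), and you anticipate needing the inductive hypothesis to ``kill residual terms,'' which indicates the plan is not actually carried through --- without $E^{s,m}=0$ the recursion, and hence the whole induction, does not close. The paper kills the two terms separately and by a much shorter mechanism: $\delta'_0\colon Q'_0\to S$ annihilates $\rad Q'_0$, the term $\delta'_0\alpha'_1\varphi_1^{s,m}$ dies because $\Im\alpha'_1\subset\rad Q'_0$ (so one never needs $\delta'_0\alpha'_1\delta'_2=0$, contrary to your claim that the radical route is blocked by $\varphi_1^{s,m}$ not factoring through $\delta'_2$ --- the relevant containment is on the $\alpha'_1$ side, not the $\varphi_1^{s,m}$ side), and the term $\delta'_0\phi_0^{s-1,m-1}\delta_s$ dies using $\Im\delta_s\subset\rad Q_{s-1}$ together with relation (a) for $\phi_0^{s-1,m-1}$. (The paper's own justification of this line is admittedly terse, but it is a one-line radical argument, not the $C_i$-based computation you propose.) As written, your proposal is therefore incomplete at its central point.
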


\begin{proof} As we mentioned in Remark \ref{levantados}, the morphism  $\varphi_0^{s,m}$ corresponds to a lifting in degree $0$ of $\delta_0 \varphi_0^{s,m}$. If $s=0$, then  $\varphi_0^{0,m}= \gamma_0^{0,0}$ and $\delta_0  \gamma_0^{0,0} = g_0$, and hence the formula holds for any $m$. Assume $s>0$. 
Now we proceed by induction on $m$. For $m=0$, we have that $\delta_0 \varphi_0^{s,0}=g_s$.
If $s< n+m$, (f) implies
\[ - \varphi_{0}^{s,m} 
 + (-1)^s \varphi_{0}^{s,m-1}  =  \Phi_1^{s,m},\]
for
$ \Phi_{1}^{s,m} =  \delta_1  \phi_1^{s,m}  - \phi_{0}^{s-1, m-1}  \delta_{s} +  T_{\delta_1}   \varphi_1^{s,m} + (-1)^{s} \varphi_{0}^{s-1, m-1}  T_{\delta_{s}}$.
Notice that $\delta_0\delta_1=0$,  $\delta_0  \phi_{0}^{s-1, m-1}  \delta_{s}=0$ and  $ \delta_0  T_{\delta_1 } \varphi_1^{s,m}=0$ since $\delta_0 \colon Q_0 \to S$, $\Im T_{\delta_1} \subset \rad Q_0$ and $\Im \delta_s \subset \rad Q_{s-1}$. Hence,
 \[ \delta_0  \varphi_0^{s,m} =  (-1)^s  \delta_0 \varphi_{0}^{s,m-1} + (-1)^{s-1}  \delta_0 \varphi_{0}^{s-1, m-1}  T_{\delta_{s}}.\]
 Now, by inductive hypothesis, we have that 
   \begin{align*} 
  \delta_0  \varphi_0^{s,m} =  \sum_{k=0}^{r}\  \binom{r}{k} 	(-1)^{k+s} (g_{s-2k} T_{\delta_{s-2k +1}} \cdots T_{\delta_{s}}- g_{s-2k-1} T_{\delta_{s-2k}} \cdots T_{\delta_{s}}),
  \end{align*}
  for  $m = 2r +1$,
  and   for $m = 2r+2$, we have
   \begin{align*}
    \delta_0  \varphi_0^{s,m}  
  =&  \sum_{k=0}^{r}\  \binom{r}{k} (-1)^{k}	( g_{s-2k} T_{\delta_{s-2k +1}} \cdots T_{\delta_{s}}- g_{s-2k-1} T_{\delta_{s-2k}} \cdots T_{\delta_{s}} \\
 & +   g_{s-1-2k} T_{\delta_{s-1-2k +1}} \cdots T_{\delta_{s}}- g_{s-1-2k-1} T_{\delta_{s-1-2k}} \cdots T_{\delta_{s}}) \\ 
   =& g_{s} T_{\delta_{s+1}} \cdots T_{\delta_{s}}
   + \sum_{k=1}^{r}\  \binom{r}{k} 	(-1)^{k} g_{s-2k} T_{\delta_{s-2k +1}} \cdots T_{\delta_{s}} \\ 
& + \sum_{l=1}^{r} \binom{r}{k-1}  (-1)^{k} g_{s-2k} T_{\delta_{s-2k+1}} \cdots T_{\delta_{s}}  +   (-1)^{r+1} g_{s-2r-2} T_{\delta_{s-2r-1}} \cdots T_{\delta_{s}} \\
   =& \sum_{k=0}^{r+1} \binom{r+1}{k} (-1)^{k}g_{s-2k} T_{\delta_{s-2k+1}} \cdots T_{\delta{s}}
 \end{align*}
using that $\binom{r}{k} + \binom{r}{k-1} = \binom{r+1}{k}$.
Finally, arguing as above, if $s = n+m$ equation $(g)$  implies
\[\delta_0 \varphi_{0}^{n+m,m}  = (-1)^{n+m+1} \delta_0  \varphi_{0}^{n+m-1,m-1} T_{\delta_{n+m}}.\] 
Therefore, by  inductive hypothesis, we get 
\[\delta_0  \varphi_{0}^{n+m-1,m-1} T_{\delta_{n+m} }=(-1)^{(m-1)(n+1) + \frac{(m-1)m}{2}}g_n T_{\delta_{n+1}} \cdots T_{\delta_{n+m}}\]
and the lemma follows.
\end{proof}
	
The next theorem describes the algebra structure of  $\Ext^*_{A_f}(S, S)$ in terms of the Yoneda product in $\Ext^*_A(S,S)$.
As we mention in Theorem \ref{ext}, we use the isomorphism 
$$\Ext^*_{A_f}(S, S) \simeq \Ext^*_A(S, S) \otimes_\K \K[x],$$
 thus we identify $\hat g$ with $\sum_{i=0}^n g_i \ x^{n-i}$ for any $\hat g\in \Ext^n_{A_f}(S, S)$. 
Recall that we denote by $\circ$ and $\star$ the Yoneda products in $\Ext^*_{A_f}(S, S)$ and $\Ext^*_{A}(S, S)$, respectively.
\begin{theorem}\label{thm yoneda}
	If  $S$ satisfies $(\ast A)$ then 
	the product of $\hat g \in \Ext^n_{A_f}(S, S)$ 
			and $\hat h \in \Ext^m_{A_f}( S, S)$ is given by
\[ h_i x^{m-i} \  \circ  \ g_s \ x^{n-s} =    (-1)^{s(m-i)} h_i  \star g_s \ x^{m+n-i-s}  +  \sum_{l=s+1}^{m+n-i} c_{i,s,l} \ h_i  \star g_{s} T_{\delta_{s+1}} \cdots T_{\delta_{l}}  \ x^{m+n-i-l} 
\]
for certain integers $c_{i,s,l}$, and using linearity.
\end{theorem}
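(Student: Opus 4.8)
The plan is to read the Yoneda product off the convenient liftings $\hat\varphi_m$ of Proposition \ref{Proposition 6.1} and then transport the answer through the isomorphism of Theorem \ref{ext}. First I would unwind the definition of the product. Since $\hat h = (0,h,0)$ with $h = [h_0\ \cdots\ h_m]$ and $\hat\varphi_m = (\varphi_m,\phi_m,\varphi_m)$ is a lifting of $\hat g$, the composition rule in $\C_f$ gives
\[ \hat h\circ\hat g = \hat h\,\hat\varphi_m = (0,h\varphi_m,0), \]
so only the $\varphi$-part of the lifting intervenes: the middle slot $v_1=h$ of $\hat h$ meets the first slot $u_0=\varphi_m$ of $\hat\varphi_m$, and the rule $(v_0,v_1,v_2)(u_0,u_1,u_2)=(v_0u_0,\,v_2u_1+v_1u_0,\,v_2u_2)$ kills every other term. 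Through Proposition \ref{cero} and the identification of Theorem \ref{ext}, the class $\hat h\circ\hat g$ is represented by the row $h\varphi_m$, whose $s'$-th entry $\sum_{t=0}^m h_t\,\varphi^{s',m}_t\in\Hom_A(Q_{s'},S)=\Ext^{s'}_A(S,S)$ carries the weight $x^{(n+m)-s'}$. Thus
\[ \hat h\circ\hat g = \sum_{s'=0}^{n+m}\Bigl(\textstyle\sum_{t=0}^m h_t\,\varphi^{s',m}_t\Bigr)\,x^{(n+m)-s'}, \]
where $\varphi^{s',m}_t=0$ for $s'<t$.

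The second step is to recognize each summand $h_t\varphi^{s',m}_t$ as a Yoneda product in $\Ext^*_A(S,S)$. By Remark \ref{levantados}, $\varphi^{s',m}_t$ is the top component of a degree-$t$ chain map lifting the cocycle $\delta'_0\varphi^{s'-t,m-t}_0\colon Q_{s'-t}\to S$; hence, by the very definition of $\star$,
\[ h_t\,\varphi^{s',m}_t = h_t\star\bigl[\delta'_0\varphi^{s'-t,m-t}_0\bigr]\in\Ext^{s'}_A(S,S). \]
Now I would substitute the explicit description of $[\delta'_0\varphi^{s'-t,m-t}_0]$ from Lemma \ref{lifting}, applied with $(s,m)$ replaced by $(s'-t,m-t)$ and distinguishing $s'<m+n$ from the boundary $s'=m+n$. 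This writes the class as an integer combination of the morphisms $g_l\,\alpha_{l+1}\cdots\alpha_{s'-t}\colon Q_{s'-t}\to S$ for $0\le l\le s'-t$. Re-indexing by $i=t$ and $s=s'-t$ converts the whole expression into a combination of products $h_i\star g_l\,\alpha_{l+1}\cdots\alpha_s$ of $x$-weight $x^{(n+m)-s'}=x^{m+n-i-s}$, with ranges $0\le i\le m$, $0\le s\le n+m-i$ and $0\le l\le s$.

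The last step is to split off the pure terms. The summand with $l=s$ has an empty string of $\alpha$'s and is simply $h_i\star g_s$; every other summand ($l\le s-1$, forcing $s\ge1$) carries at least one $\alpha$ and is absorbed into the second sum with unnamed integer coefficients $c_{isl}$, giving exactly the ranges $1\le s\le n+m-i$, $0\le l\le s-1$. It then remains to pin down the coefficient of the diagonal term $h_i\star g_s$. Inspecting Lemma \ref{lifting}, the top contribution $g_s$ comes only from $k=0$, with coefficient $1$ when $m-i$ is even and $(-1)^s$ when $m-i$ is odd, i.e.\ $(-1)^{s(m-i)}$ in both cases; since $g_s=0$ for $s>n$, these pure terms assemble into $\sum_{i=0}^m\sum_{s=0}^n(-1)^{s(m-i)}h_i\star g_s\,x^{m+n-i-s}$, the first sum of the statement.

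I expect the sign bookkeeping in this final step to be the only genuine difficulty: one must check that the two parity cases of Lemma \ref{lifting} collapse uniformly to $(-1)^{s(m-i)}$, and that the boundary value $s'=m+n$ — where the putative diagonal term $g_s$ with $s=n+(m-i)>n$ vanishes unless $i=m$ — contributes nothing off the diagonal, so that the range of $s$ in the first sum is exactly $0\le s\le n$. Everything else reduces to the matrix bookkeeping of the first step and the direct substitution of Lemma \ref{lifting}, so the heart of the argument is really the recognition $h_t\varphi^{s',m}_t=h_t\star[\delta'_0\varphi^{s'-t,m-t}_0]$ together with the coefficient analysis just described.
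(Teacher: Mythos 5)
Your proposal is correct and follows essentially the same route as the paper's own proof: expanding $\hat h\circ\hat g=(0,h\varphi_m,0)$ via the triangular lifting of Proposition \ref{Proposition 6.1}, identifying each entry $h_i\varphi^{j,m}_i$ with $h_i\star\delta_0\varphi^{j-i,m-i}_0$ through Remark \ref{levantados}, substituting Lemma \ref{lifting}, and isolating the $k=0$ diagonal terms with coefficient $(-1)^{s(m-i)}$ while absorbing the $\alpha$-laden remainders into the $c_{isl}$. The sign and boundary observations you flag are exactly the points the paper handles, and in the same way.
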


\begin{proof}
		Let $\hat g \in \Ext^n_{A_f}(S, S)$ 
		and $\hat h \in \Ext^m_{A_f}( S, S)$. 
 We use the matrix notation $[h_0 \dots h_m]$ for $\sum_{i=0}^m h_ix^{m-i}$.  From Proposition \ref{Proposition 6.1} we have that 
	$$\hat h \circ  \hat g  = \hat h \ \hat \varphi_m =  
	(0, h  \varphi_m , 0)$$ 
with
	\[
	 h   \varphi_m   = [h_0 \ \dots h_m] \begin{pmatrix}
		\varphi_0^{0,m} & \varphi_0^{1,m} & \varphi_0^{2,m} &  \dots & \varphi_0^{m,m} &    \dots &  \varphi_0^{n+m,m} \\
		0 & \varphi_1^{1,m} & \varphi_1^{2,m} & \dots &    \varphi_1^{m,m}    & \dots &  \varphi_1^{n+m,m} \\ 
		\vdots & &  &  & \vdots &  &  \vdots \\
		0 &   \dots &    & 0 & \varphi_{m}^{m,m} &  \dots   &  \varphi_{m}^{n+m,m} 
	\end{pmatrix}. \]
	Hence
	\[ \hat h \circ \hat g = ( \sum_{i=0}^m  h_i \ x^{m-i}) \circ ( \sum_{i=0}^n g_i \ x^{n-i}) = \sum_{i=0}^m \sum_{j=i}^{n+m} h_i  \varphi^{j,m}_i \ x^{m+n-j}. \]
	As mentioned in Remark \ref{levantados},  $\varphi^{s,m}_t$  corresponds to a lifting in degree $t$  of $ \delta_0 \varphi^{s-t, m-t}_0$
	and, by the definition of the Yoneda product, we have that 
	\begin{align*}
	 \hat h \circ \hat g & =  \sum_{i=0}^m \sum_{j=i}^{n+m} h_i  \star \delta_0 \varphi^{j-i,m-i}_0 \ x^{m+n-j} =  \sum_{i=0}^m \sum_{s=0}^{n+m-i} h_i  \star \delta_0 \varphi^{s,m-i}_0 \ x^{m+n-i-s}.
	 \end{align*}
	By Lemma \ref{lifting},  if $s<n+m$, $ h_i \star  \delta_0 \varphi^{s,m-i}_0$ equals to 
		\[ \begin{cases}
	\sum_{k=0}^{r} \binom{r}{k} (-1)^{k} h_i \star g_{s-2k} T_{\delta_{s-2k+1}} \cdots T_{\delta_{s}}
	\\
	 \sum_{k=0}^{r}\  \binom{r}{k} 	(-1)^{k+s} (h_i \star g_{s-2k} T_{\delta_{s-2k +1}} \cdots T_{\delta_{s}} - h_i \star  g_{s-2k-1} T_{\delta_{s-2k}} \cdots T_{\delta_{s}}) 	 \end{cases} \]
when $m-i$ equals $2r$ or $2r+1$ respectively.  Also, if $s=n+m$,
	  then $i=0$ and 
$$ h_0 \star  \delta_0 \varphi^{s,m-i}_0= (-1)^{m(n+1) + \frac{m(m+1)}{2}} h_0 \star g_n T_{\delta_{n+1}} T_{\delta_{n+2}} \cdots T_{\delta_{n+m}}.$$
Hence,
\[ \hat h \circ \hat g  =  \sum_{i=0}^m \sum_{s=0}^{n} (-1)^{s(m-i)} h_i  \star g_s \ x^{m+n-i-s}  +  \sum_{i=0}^m \sum_{s=1}^{n+m-i} \sum_{l=0}^{s-1} c_{i,s,l} \ h_i  \star g_{l} T_{\delta_{l+1}} \cdots T_{\delta_{s}}  \ x^{m+n-i-s}  
\]
with the convention that $g_l=0$ if  $l>n$.	
\end{proof}

\begin{corollary}\label{previous}
 If   $\Im T_{\delta_i} \subset \rad Q_{i-1}$ for all $i >0$ then $S$ satisfies $(\ast A)$ and
$$\Ext^*_{A_f}(S, S) \simeq \Ext^*_A (S,S) \otimes \K[x]$$
as graded algebras. That is,  
		\[ \hat h \circ \hat g  =  \sum_{i=0}^m \sum_{s=0}^{n} (-1)^{s(m-i)} h_i  \star g_s \ x^{m+n-i-s} \]
 for all $\hat g = \sum_{i=0}^n g_i \ x^{n-i}\in \Ext^n_{A_f}(S, S)$ and $ \hat h =  \sum_{i=0}^m  h_i \ x^{m-i}\in\Ext^m_{A_f}(S, S)$.
 \end{corollary}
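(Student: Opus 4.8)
The plan is to obtain this corollary as a direct specialization of Theorem \ref{thm yoneda}. Recall that that theorem writes the product $\hat h \circ \hat g$ as the sum of a \emph{principal term}
\[ \sum_{i=0}^m \sum_{s=0}^{n} (-1)^{s(m-i)} h_i  \star g_s \ x^{m+n-i-s} \]
and a \emph{correction term} $\sum_{i=0}^m \sum_{s=1}^{n+m-i} \sum_{l=0}^{s-1} c_{isl}\, h_i  \star g_{l} \alpha_{l+1} \cdots \alpha_{s}\, x^{m+n-i-s}$. Since the principal term is already exactly what the corollary asserts, the whole content of the proof is to show that the correction term vanishes under the extra hypothesis $\Im \alpha_i \subset \rad Q_{i-1}$. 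Thus I would argue that every composite $g_l \alpha_{l+1}\cdots \alpha_s$ occurring there is the zero morphism.

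The key observation comes in two pieces. First, each $g_l \in \Hom_A(Q_l, S)$ is an $A$-module homomorphism into the semisimple module $S$, so $g_l(\rad Q_l) = g_l(\rad A \cdot Q_l) = \rad A \cdot g_l(Q_l) \subseteq \rad A \cdot S = 0$; that is, $g_l$ annihilates $\rad Q_l$. Second, in the correction term the inner index satisfies $0 \leq l \leq s-1$, hence $l < s$, so the composite $\alpha_{l+1}\cdots\alpha_s$ always contains at least the outermost factor $\alpha_{l+1}\colon Q_{l+1}\to Q_l$. Because the image of any composite is contained in the image of its outermost map, we get $\Im(\alpha_{l+1}\cdots\alpha_s)\subseteq \Im \alpha_{l+1}\subseteq \rad Q_l$ by hypothesis; note that this step needs nothing about the $\alpha_i$ being $A$-linear, only that $\alpha_{l+1}$ lands in the radical. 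Combining the two pieces gives $g_l \alpha_{l+1}\cdots\alpha_s = 0$, hence $h_i \star (g_l \alpha_{l+1}\cdots\alpha_s)=0$, so the correction term vanishes termwise.

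Once the correction term is eliminated, only the principal term survives, giving precisely the displayed formula for $\hat h \circ \hat g$. The last step is to recognize this formula as the multiplication in the graded tensor product $\Ext^*_A(S,S)\otimes_\K \K[x]$, with $x$ placed in degree $1$: for homogeneous elements the Koszul sign rule yields $(h_i\otimes x^{m-i})(g_s\otimes x^{n-s}) = (-1)^{(m-i)s}(h_i\star g_s)\otimes x^{m+n-i-s}$, which matches the surviving signs $(-1)^{s(m-i)}$ exactly (the polynomial factor $\K[x]$ multiplies without internal sign). Invoking the graded-vector-space identification $\hat g \leftrightarrow \sum_s g_s x^{n-s}$ from Theorem \ref{ext} then upgrades that isomorphism to one of graded algebras.

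I do not expect a genuine obstacle here, as the statement is essentially a clean specialization of Theorem \ref{thm yoneda}. The only point demanding care is the vanishing argument for the correction term: one must be sure that the index range $l \le s-1$ really forces at least one $\alpha$-factor into the composite, and that it is the semisimplicity of $S$ that makes each $g_l$ kill the radical. Matching the surviving sum with the sign convention of the graded tensor product is purely formal and presents no difficulty.
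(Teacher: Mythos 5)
Your proposal is correct and follows exactly the route the paper intends (the corollary is stated without proof as an immediate specialization of Theorem \ref{thm yoneda}): the hypothesis $\Im\alpha_{l+1}\subset\rad Q_l$ together with the fact that the $A$-linear map $g_l\colon Q_l\to S$ kills $\rad Q_l$ (as $S$ is semisimple) forces every correction term $h_i\star g_l\alpha_{l+1}\cdots\alpha_s$ with $l<s$ to vanish, leaving the principal term. Your observation that only the outermost factor $\alpha_{l+1}$ matters, and your identification of the surviving signs $(-1)^{s(m-i)}$ with the Koszul sign of the graded tensor product, are both accurate.
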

 
  \subsection{The product in case (B)}
  
In this case the  $A$-module $S$ 
satisfies condition $(\ast B)$ and its projective resolution is defined as in Proposition \ref{resolucion B}. 

Notice that all the algebras $A$ considered in this case satisfy the following property: for each vertex $i$ the simple module $S_i$ is a direct summand of the $\K$-vector space $\Im f$. This remark follows from condition \ref{itemtwo}  and the fact that $\Im \delta_{1} = \rad Q_{0} = \rad A e_i$, and it implies that the quiver $Q$ has oriented cycles going through $i$, for all vertices $i$.

In particular all the simple modules in Examples \ref{example 3.5} and \ref{example 3.6} satisfy condition $(\ast B)$. In fact, for each vertex $i$ there exist a nonzero path $w \in e_i(\rad A)e_j$ and an arrow $e_j \alpha e_i$ such that $w \alpha=0= \alpha w$ and $f(w \otimes \alpha)=e_i$. The simple module $S_i$ admits a minimal projective resolution 
\[ \cdots   \to Ae_j \stackrel{\delta_1}  {\longrightarrow}  Ae_i  \stackrel{\delta_2} {\longrightarrow}   Ae_j \stackrel{\delta_1}  {\longrightarrow}  Ae_i  \stackrel{\delta_0}  {\longrightarrow} S_i\to 0 \]
such that $\delta_{2}(x) = x w, \delta_{1}(x) = x \alpha$ and $\ker \delta_{1} = \K w$.
Now,  since
\begin{align*}
\delta_{0}  T_{\delta_{1}} (w) &=  \delta_0 (f(w \otimes \alpha)) =   \delta_0(e_i) = e_i, \\
\delta_{2}  T_{\delta_{1}} (w) &=  T_{\delta_{1}} (w) w = f(w \otimes \alpha) w = w,
\end{align*}
condition \ref{itemone} follows. Condition \ref{itemtwo} can be deduced from the equality
\[ T_{\delta_{1}}(w) = f(w \otimes \alpha)=e_i\]
and the fact that $\Im \delta_1= \rad A e_i$.

Now we describe the algebra structure of  $\Ext^*_{A_f}(S, S)$ in terms of the Yoneda product in $\Ext^*_A(S,S)$ when 
$$\Ext^*_{A_f}(S, S) \simeq \Ext^*_A(S, S)$$
denoting by $\circ$ and $\star$ the Yoneda products in $\Ext^*_{A_f}(S, S)$ and $\Ext^*_{A}(S, S)$, respectively.
\begin{theorem}\label{thm yoneda b}
	If  $S$ satisfies $(\ast B)$ then 
	the product of $\hat g \in \Ext^n_{A_f}(S, S)$ 
			and $\hat h \in \Ext^m_{A_f}( S, S)$ is given by
	\[ 
	 \hat  h  \circ \hat g = \begin{cases}
	 0 &\mbox{if $m$ and $n$ are odd,}  \\
	  h \star g &\mbox{in all the other cases.} \\
	 \end{cases}\]	
	\end{theorem}

	\begin{proof}
		Let $\hat g \in \Ext^n_{A_f}(S, S)$ 
		and $\hat h \in \Ext^m_{A_f}( S, S)$. 
  From Lemma \ref{b--g-1} we have that, if $n$ is even then
  \[ \hat h \circ  \hat g  =  (0, h, 0) (\gamma_m, \beta_m, \gamma_m) = (0, h \gamma_m, 0)= h \star g \]
  since $\{\gamma_m\}_{m \geq 0}$ is a lifting for $g$. Now, if $n$ is odd we use Lemma \ref{b--g-2} and we get that for
 $m$  even we have
   \[ \hat h \circ  \hat g  =  (0, h, 0) (\gamma_m, \beta_m, \gamma_m) = (0, h \gamma_m, 0)= h \star g \]
  and for $m$  odd, we have 
   $ \hat h \circ  \hat g  =  (0, h, 0) (0, \gamma_m, 0) = 0$.	
	\end{proof}

\begin{example}
Let $A$ and $f$  be as in Example \ref{example 1}, $S$ satisfies condition $(\ast A)$. In this case, a direct computation shows that $\Im T_{ \delta_i } \subset \rad Q_{i-1}$ for any $i >0$. By Corollary \ref{previous} we have that $\Ext^*_{A_f}(S, S) \simeq \Ext^*_A (S,S) \otimes \K[x]$ with the product given by 
$$\hat h \circ \hat g  =  \sum_{i=0}^m \sum_{s=0}^{n} (-1)^{s(m-i)} h_i  \star g_s \ x^{m+n-i-s}.$$
 \end{example}

\begin{example}
 For $A$ and $f$ as in Example \ref{example 2}, $S$ satisfies condition $(\ast A)$. One can check that $\Im T_{\delta_{2k-1}} \subset \rad Q_{2k-2}$ and $\Im T_{\delta_{2k}} \not \subset \rad Q_{2k-1}$, for any $k > 0$. 
 More precisely,  $T_{\delta_{2k-1}} : P_1 \oplus P_2 \to P_2 \oplus P_1$ and $T_{\delta_{2k}} : P_1 \oplus P_2 \to P_1 \oplus P_2$ are defined by
\[  [T_{\delta_{2k-1}}(x_1, x_2)] =\tilde{f}  \left (
\begin{bmatrix}
	x_1 &
	x_2
\end{bmatrix} \otimes 
\begin{bmatrix}
	\alpha_1 & 0 \\
	0 & \alpha_2
\end{bmatrix}  \right )
\begin{bmatrix}
	e_2 & 0\\
	0 & e_1
\end{bmatrix},\]
\[  [T_{\delta_{2k}}(x_1, x_2)] = -\tilde{f}  \left (
\begin{bmatrix}
	x_1 &
	x_2
\end{bmatrix} \otimes 
\begin{bmatrix}
	\alpha_1 \alpha_2 & 0 \\
	0 & \alpha_2 \alpha_1
\end{bmatrix}  \right )
\begin{bmatrix}
	e_1 & 0 \\
	0 & e_2
\end{bmatrix} +
\begin{bmatrix}
	x_1 &
	x_2
\end{bmatrix}
\begin{bmatrix}
	e_1 & 0 \\
	0 & e_2
\end{bmatrix}.\]
  Then, by Theorem \ref{thm yoneda}, the Yoneda product in $\Ext_{A_f}(S,S)$ is given by  
\[ \hat h \circ \hat g = \sum_{i=0}^m \sum_{s=0}^{n}  (-1)^{s(m-i)} h_i  \star g_s \ x^{m+n-i-s}   + \sum_{i=0}^m \sum_{k=1}^{\frac{n+m-i}{2}}  c_{i,2k,2k-1} \ h_i  \star g_{2k-1}    \ x^{m+n-i-2k} \]
since  $g_{2k-1}  \alpha_{2k} =  g_{2k-1}$  in this particular example. For instance, when $n = m = 1$, we obtain $$\hat h\circ \hat g=  h_0\star g_0 \ x^2+ (h_1\star g_0 - h_0\star g_1)x + (h_1\star g_1 - h_0\star g_1).$$
\end{example}

\begin{example} The description of the Ext-algebra of $A= \K [\alpha]/ \langle \alpha^r \rangle$  as graded algebra is well known:
\[
\Ext^*_{A}(S, S) \simeq 
\begin{cases}
\K[x] &\mbox{ if $r=2$,} \\
\K[u,v]/ \langle u^2 \rangle &\mbox{ if $r>2$, with $u$ of degree $1$ and $v$ of degree $2$.}
\end{cases}
\]
For $f$ as in Example \ref{example 3.5}, we have that $A_f= \K [\alpha]/\langle \alpha^{2r} \rangle$.  Then it is clear that $\Ext^*_{A_f}(S, S) \simeq 
\Ext^*_{A}(S, S)$ as graded algebras for every $r>2$.
\end{example}

\end{document}